\newtheorem{Theorem}{Theorem}
\newtheorem{Lemma}[Theorem]{Lemma}
\newtheorem{Proposition}[Theorem]{Proposition}
\newtheorem{Corollary}[Theorem]{Corollary}
\newtheorem{Conjecture}[Theorem]{Conjecture}
\theoremstyle{definition}
\newtheorem{Definition}[Theorem]{Definition}
\newtheorem{Example}[Theorem]{Example}
\newtheorem{Remark}[Theorem]{Remark}
\newtheorem{Claim}{Claim}
\providecommand{\ch}[1]{\text{\raise 2pt \hbox{$\chi$}\kern-0.2pt}_{#1}}
\def\XXint#1#2#3{{\setbox0=\hbox{$#1{#2#3}{\int}$}
      \vcenter{\hbox{$#2#3$}}\kern-.5\wd0}}
\newcommand{\N}{\mathbb{N}}     
\newcommand{\R}{\mathbb{R}}     
\newcommand{\calB}{\mathscr{B}}
\newcommand{\calQ}{\mathscr{Q}}
\newcommand{\calR}{\mathscr{R}}
\newcommand{\calS}{\mathscr{S}}
\newcommand{\btheta}{\boldsymbol{\theta}}
\DeclareMathOperator{\diam}{diam}			
\DeclareMathOperator{\loc}{loc}			
\renewcommand{\geq}{\geqslant}
\renewcommand{\leq}{\leqslant}
\renewcommand{\epsilon}{\varepsilon}
\newcommand{\liminfe}{\mathop{\underline{\lim}}}
\newcommand{\limsupe}{\mathop{\overline{\lim}}}
\begin{document}
\author{Emma D'Aniello, Laurent Moonens and Joseph M. Rosenblatt}
\date{January 23, 2018}

\title[Differentiating Orlicz spaces with rare bases of rectangles]{Differentiating Orlicz spaces\\ with rare bases of rectangles}

\begin{abstract}
In the current paper, we study how the speed of convergence of a sequence of angles decreasing to zero influences the possibility of constructing a rare differentiation basis of rectangles in the plane, one side of which makes with the horizontal axis an angle belonging to the given sequence, that differentiates precisely a fixed Orlicz space.
\end{abstract}

\subjclass[2010]{Primary 42B25; Secondary 26B05}



     
\maketitle

In the sequel we always call \emph{interval} in $\R^2$ a set of the form $Q=[a,b]\times [c,d]$ where $a<b$ and $c<d$ are real numbers; a \emph{rectangle}, on the other hand, will be any set obtained from an interval by some isometry of the plane.

There is a long history of research around the maximal operator $M_{\btheta}$ associated to a set $\btheta\subseteq [0,2\pi)$ of angles, defined as:
$$
M_{\btheta} f(x):=\sup_{R}\frac{1}{|R|}\int_R |f|,
$$
where the upper bound extends to all rectangles $R$ in $\R^2$ containing $x$, one side of which make an angle $\theta\in\btheta$ with the horizontal axis.

It has been shown in 1977 by A.~Cordoba and R.~Fefferman \cite{CF1977} that whenever $\btheta=\{\theta_j\}$ is the image of a lacunary sequence (\emph{i.e.} satisfying ${\theta_{j+1}}\simeq \lambda {\theta_j}$ for some $0<\lambda<1$), then $M_{\btheta}$ is bounded on $L^2(\R^2)$~---~and hence also on $L^p(\R^2)$ for all $2\leq p<\infty$. This was extended to any $1<p<\infty$ a year later by A.~Nagel, E.~M.~Stein and S.~Wainger in \cite{NSW1978}. When $\btheta=\{0\}\cup\{1/j: j\in\N^*\}$, M.~de Guzman proved in 1981 (see \cite{DEGUZMAN1981}) that $M_{\btheta}$ is always unbounded on $L^p(\R^2)$ for any $p>1$. Later, many authors considered the influence that the ``size'' of the set $\btheta$ has on its yielding the (un)boundedness of $M_{\btheta}$ on some $L^p$ spaces; a case of particular interest was that of Cantor sets, dealt with by \emph{e.g.} N.~Katz in 1996 \cite{KATZ1996} and K.~Hare in 2000 \cite{HARE2000}. Lately, in 2009, M.~Bateman gave in \cite{BATEMAN} a beautiful characterization of sets of angles $\btheta$ yielding the boundedness of $M_{\btheta}$ in $L^p(\R^2)$ for all $1<p<\infty$, showing it is equivalent to the possibility of covering $\btheta$ by a finite collection of $N$-lacunary sets (see \cite{BATEMAN} for a definition). Even more recently in 2013, P.~Hagelstein studied in \cite{HAGELSTEIN} the relation between the Minkowski dimension of $\btheta$ being zero, and the boundedness in $L^p(\R^2)$ of the associated maximal operator; namely he shows that it is a necessary, but not sufficient condition for its boundedness.

{ A first observation we make in the current paper is to observe that the boundedness of $M_{\btheta}$ either occurs in $L^p(\R^2)$ for all $1<p\leq\infty$, or fails to occur in any of those spaces. The fact that one can take $p=\infty$ in the latter statement being, as it seems to us, a new observation following from Bateman's work \cite{BATEMAN}. We expose our short argument in section~\ref{sec.bateman}.}

On another hand, the boundedness on $L^p$ of the maximal operator $M_{\btheta}$ is also related to differentiation properties of the associated differentiation basis; namely $M_{\btheta}$ is bounded on $L^p(\R^2)$ for all $1<p<\infty$ if and only if Lebesgue's differentiation theorem holds in all $L^p(\R^2)$ spaces ($1<p<\infty$) for the associated basis, \emph{i.e.} if and only the following equality holds for all $f\in L^p(\R^2)$ ($1<p<\infty$) and a.e.\ $x\in\R^2$:
$$
f(x)=\lim_{\begin{subarray}{c}R\ni x\\\diam R\to 0\end{subarray}}\frac{1}{|R|}\int_R |f|,
$$
where the limit is taken on rectangles $R$ containing $x$, one side of which makes an angle $\theta\in\btheta$ with the horizontal axis; we then say that the associated basis \emph{differentiates} $L^p(\R^2)$ for all $1<p<\infty$ (see \emph{e.g.} \cite[Chapter~III]{DEGUZMAN1975} where it is studied how the differentiation of function spaces by a basis, and the behavior of the associated maximal operator are related). It is also the result of an observation by J.-O.~Str\"omberg in \cite{STROMBERG1977} that for $\theta=\{2^{-k}:k\in\N\}$, the above differentiation theorem fails in any Orlicz space ``larger'' than $L\log^2 L(\R^2)$, \emph{i.e.} in any $L^\Phi(\R^2)$ where $\Phi$ is an Orlicz function (see below for the precise definition of those term and space) satisfying $\Phi(t)=o(t\log_+^2 t)$ at $\infty$.

In the present paper, we also study for various \emph{sequences} $(\theta_j)$ decreasing to $0$, the differentiation properties of some differentiation bases of rectangles in $\R^2$ whose elements have countably many shapes (\emph{i.e.} for which the ratio between the length of the horizontal and vertical sides belong to a countable set), knowing moreover that one of their sizes make an angle $\theta_j$ with the horizontal axis for some $j$, in terms of the \emph{speed of convergence} of the sequence $(\theta_j)$.

More precisely, we prove three theorems of the following structure, for three classes of sequences $\btheta=(\theta_j)$ decreasing to $0$, where $\Phi$ is some Orlicz function and $L^\Phi(\R^2)$ is the associated Orlicz space (see below for the precise definitions of the terms used here as well as in the following statement).
\begin{Theorem}\label{thm.main}
There exists a countable family $\calQ$ of intervals of the form $[0,L]\times [0,l]$ having the following properties:
\begin{itemize}
\item[(i)] the differentiation basis $\calB$ of all intervals in $\R^2$ whose shape is that of some $Q\in\calQ$, differentiates $L^1(\R^2)$;
\item[(ii)] the differentiation basis $\calB_{\btheta}$ obtained from $\calB$ by allowing its elements to rotate of an angle $\theta\in\btheta$ around their lower left vertex, fails to differentiate $L^\Psi(\R^2)$ for any Orlicz function $\Psi$ satisfying $\Psi=o(\Phi)$ at $\infty$;
\item[(iii)] there exists a differentiation basis $\calB'\subseteq\calB_{\btheta}$ which differentiates $L^\Phi(\R^2)$ but fails to differentiate $L^\Psi(\R^2)$ for any Orlicz function $\Psi$ satisfying $\Psi=o(\Phi)$ at $\infty$ (we then say that $\calB'$ differentiates precisely $L^\Phi(\R^2)$).
\end{itemize}
\end{Theorem}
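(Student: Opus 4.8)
The plan is to produce all three objects by one linked construction whose parameters are read off from the Orlicz function $\Phi$ and from the prescribed sequence $\btheta=(\theta_j)$; the three instances of the statement (the three ``speed classes'' of $\btheta$) differ only in the bookkeeping of these parameters. One first fixes $\calQ=\{Q_k\}_{k\ge 1}$ with $Q_k=[0,L_k]\times[0,l_k]$, choosing the eccentricities $\varepsilon_k:=L_k/l_k$ increasing to $+\infty$, sufficiently sparse, and calibrated both to $\Phi$ and to the rate at which $\theta_j\downarrow 0$. Since the basis $\calB$ of all intervals having the shape of some $Q_k$ is invariant under translations and dilations, item (i) reduces to a Vitali-type covering property for $\calB$ (equivalently, to a weak-type $(1,1)$ bound for the associated maximal operator): the sparseness of $\calQ$ is what prevents the $L\log L$ phenomenon of the full interval basis, and differentiation of $L^1(\R^2)$ then follows from the density of $C_c(\R^2)$ together with a standard density argument (Banach principle). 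This is the least delicate point.

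For item (ii) one carries out a quantitative Perron-tree (Kakeya-type) construction using only the prescribed directions. For each large $n$ one builds a finite subfamily $\calR_n\subseteq\calB_{\btheta}$ of rectangles with shapes among the $Q_k$ and long sides at angles in $\btheta$, sets $F_n:=\bigcup_{R\in\calR_n}R$, and exhibits a ``fattened'' companion set $G_n$ (a union of suitable translates of the rectangles of $\calR_n$) such that
\[
|F_n|\le \frac{C}{\Phi(2^n)},\qquad |G_n|\ge c>0,\qquad M_{\calB_{\btheta}}\chi_{F_n}\ge c \ \text{ on } G_n ,
\]
the comparison $|F_n|\approx \Phi(2^n)^{-1}$ being exactly what the geometry of $\btheta$ and of $\calQ$ permits. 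Given any Orlicz function $\Psi$ with $\Psi=o(\Phi)$ at $\infty$, one then places rescaled disjoint copies of the pairs $(F_n,G_n)$ inside a fixed cube and takes $f:=\sup_n 2^n\chi_{F_n}$ (or a lacunary sum of the same flavour); since $\Psi=o(\Phi)$, the copies can be arranged so that $\int_{\R^2}\Psi(|f|)<\infty$, while the averages $\frac{1}{|R|}\int_R|f|$ over $R\in\calB_{\btheta}$ have upper limit $+\infty$ on a set of positive measure. Hence $\calB_{\btheta}$ fails to differentiate $L^\Psi(\R^2)$; this is the Orlicz refinement of the de Guzman and Str\"omberg counterexamples recalled above.

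For item (iii) one thins $\calB_{\btheta}$ to a sub-basis $\calB'$ by keeping, for each diameter window $[2^{-m-1},2^{-m})$ and each angle index $j$, only those rectangles whose eccentricity is at most a threshold $e(m,j)$; the thresholds are chosen so that the resulting overlap function satisfies an Orlicz weak-type estimate of the form
\[
\bigl|\{x:M_{\calB'}f(x)>\lambda\}\bigr|\ \le\ \int_{\R^2}\Phi\!\left(\frac{C\,|f(x)|}{\lambda}\right)\,dx
\]
for all $f$ and all $\lambda>0$. Such an estimate is proved by linearising $M_{\calB'}$, discretising into a Perron-type overlap count, and running a C\'ordoba--Fefferman / Bateman-style argument organised along the Orlicz scale rather than along $L^p$; the admissible thresholds $e(m,j)$, hence the admissible $\Phi$, are exactly those compatible with the speed of $\theta_j\downarrow 0$. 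The weak estimate, together with the density of bounded compactly supported functions in $L^\Phi(\R^2)$ (using the standing $\Delta_2$-type hypothesis on $\Phi$), gives via the Banach principle that $\calB'$ differentiates $L^\Phi(\R^2)$; and one arranges the thinning so that every extremal family $\calR_n$ of item (ii) still lies in $\calB'$ (or simply re-runs the item (ii) construction using only surviving rectangles), so that $\calB'$ still fails to differentiate $L^\Psi(\R^2)$ for every $\Psi=o(\Phi)$. Hence $\calB'$ differentiates $L^\Phi(\R^2)$ precisely.

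The main obstacle is the two-sided calibration between (ii) and (iii): the Perron-tree configurations must be as efficient as the weak-type estimate for $\calB'$ permits, so that the \emph{same} Orlicz function $\Phi$ occurs both as the obstruction and as the achievement --- a matching at the level of Orlicz functions, not merely of $L^p$ exponents. Since every rectangle in play must have a side making an angle from the \emph{given} sequence $\btheta$, this balance is governed entirely by the speed at which $\theta_j\to 0$, and it is exactly here that the three cases of the theorem call for genuinely different constructions; establishing a Besicovitch/Kakeya-type counting estimate that is sharp up to the Orlicz-function level is the main technical work.
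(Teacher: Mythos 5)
Your overall geometric picture is right, and your description of item (ii) --- a small set $\Theta_k$ hit with proportion $\sim\zeta^{t_{2k}}$ by each of $k+1$ rectangles whose right halves spray out disjointly, giving a set $Y_k$ of measure $\gtrsim k\,\zeta^{-t_{2k}}\,|\Theta_k|$ on which $M_{\calB_\btheta}\chi_{\Theta_k}$ is large --- matches what the paper does in Lemma~\ref{A} and Proposition~\ref{B} almost exactly, even if you frame it as a Perron tree rather than a Nikodym-type spray. For item (ii), plugging $f_k$ supported on $\Theta_k$ into the would-be weak-$(\Psi,\Psi)$ inequality and letting $\Psi=o(\Phi)$ is precisely the paper's Proposition~\ref{C} argument. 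Two points, however, differ in a way worth spelling out.

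First, for item (i) the paper does not argue from ``sparseness of eccentricities'' in a vague sense; it chooses $\calQ=\{Q_k\}$ so that $(Q_k)$ is \emph{totally ordered by inclusion} (Proposition~\ref{B}), and then cites a result of Stokolos (\cite[Claim~1]{STOKOLOS2005}) that such a chain-ordered family of shapes yields a weak $(1,1)$ maximal inequality, hence $L^1$-differentiation. Sparsity alone is not a sufficient mechanism (the general strong basis with sparse but unchained shapes still need not differentiate $L^1$), and you would need to supply the nesting structure to make your Vitali/Banach-principle outline close.

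Second, and more importantly, for item (iii) the paper does not run a bespoke C\'ordoba--Fefferman / Bateman argument on the Orlicz scale. It instead verifies the hypotheses of Stokolos' Lemma~A (Lemma~\ref{lem.A-Stok} in the paper), a black-box lemma that takes as input (a) a Nikodym configuration (conditions (iii)--(iv): each $R\in\calR_k$ meets a small ball $\Theta_k$ with proportion $\gtrsim 1/\lambda_k$, and $|\cup\calR_k|\gtrsim\Phi(\lambda_k)|\Theta_k|$) \emph{together with} (b) an exponential overlap bound expressed through the \emph{complementary Orlicz function}~$\Psi$, namely $\int_{\R^2}\Psi\bigl(\sum_{R\in\calS}\chi_R\bigr)\le c_1\sum_{R\in\calS}|R|$ for every finite subfamily $\calS\subseteq\calR_k$. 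The bulk of section~\ref{sec.comput} of the paper is devoted to proving exactly this overlap estimate for each of the three sequence types, using the sum formula in Proposition~\ref{B}(iv) with $\varphi=\Psi$ and then summing geometric series in $\zeta^{-t_{i_r}}$. Your sketch treats the positive direction as a weak-$(\Phi,\Phi)$ estimate to be proved by linearising and discretising, which is morally what Stokolos' lemma does internally, but you never identify that the quantity to control is the overlap function $\sum\chi_R$ tested against the complementary function $\Psi$. This is precisely the ``two-sided calibration'' you flag as the main obstacle; the missing idea is that the calibration is not a bespoke covering argument but a verification that your $\calR_k$ satisfies the four numbered conditions of Stokolos' Lemma~A, the delicate one being the $\Psi$-overlap inequality (\emph{ii}) whose verification depends on the decay rate $t_k$ and hence distinguishes the three cases of the theorem.

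\end{document}
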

\noindent Namely, we consider the three following cases in the previous statement:
\begin{itemize}
\item[(1)] $\btheta=(\theta_j)$ satisfying $0<\liminfe_j (\theta_{j+1}/\theta_j)\leq\limsupe_j (\theta_{j+1}/\theta_j)<1$~---~in which case the statement above holds for $\Phi(t)=t(1+\log_+ t)$ and $L^\Phi(\R^2)=L\log L(\R^2)$ (see Theorem~\ref{thm.main1});
\item[(2)] $\btheta=(\theta_j)$ satisfying $0<\liminfe_j (\theta_{j+1}/\theta_j^d)\leq\limsupe_j (\theta_{j+1}/\theta_j^d)<1$ for some integer $d>1$~---~in which case the statement above holds for $\Phi(t)=t(1+\log_+\log_+ t)$ and $L^\Phi(\R^2)=L\log\log L(\R^2)$ (see Theorem~\ref{thm.main2});
\item[(3)] $\btheta=(\theta_j)$ defined by $\theta_j=\arctan(a^{j^d})$ for some $0<a<1$ and $0<d<1$~---~in which case the statement above holds for $\Phi(t)=t(1+\log_+^{1/d} t)$ and $L^\Phi(\R^2)=L\log^{1/d} L(\R^2)$ (see Theorem~\ref{thm.main3}).
\end{itemize}

Those results rely on some geometrical preliminaries (detailed in section \ref{sec.geom}) and on a nice previous work by A.~Stokolos \cite{STOKOLOS1989} concerning differentiation bases of rectangles in relation to the Orlicz spaces they differentiate. More specifically, in \cite{STOKOLOS1989}, A.~Stokolos constructs, for Orlicz spaces ranging (roughly speaking) between $L\log L(\R^2)$ and $L\log^2 L(\R^2)$, differentiation bases of rectangles $\calB$ satisfying the following properties:
\begin{itemize}
\item[(1)] rectangles in $\calB$ have one side forming an angle $2^{-k}$ with the horizontal axis for some $k\in\N$;
\item[(2)] $\calB$ differentiates precisely $L^\Phi(\R^2)$.
\end{itemize}
Our point, in comparison to Stokolos' result, is here to consider more general sequences $(\theta_j)$ decreasing to $0$, and to see how their convergence speed influences the Orlicz space that one can differentiate using rectangles obtained from intervals enjoying given shapes by rotations of some angle $\theta_j$.

The structure of the paper is as follows: after studying some geometrical preliminaries in the spirit of \cite{MOONENS2016}, we obtain in section~\ref{sec.bad} results concerning the Orlicz spaces a differentiation basis of rectangles associated to a given sequence of angles, does \emph{not} differentiate. Combining the results in those two sections with a lemma by A.~Stokolos (see Lemma~\ref{lem.A-Stok} below), we manage in section \ref{sec.comput} to prove the three versions of the Theorem~\ref{thm.main} stated above.

\section{Notations and Definitions}

\subsection{Rectangles.} For our purposes, a \emph{standard interval} in $\R^2$ is an interval of the form $Q=[0,L]\times [0,\ell]$.
We then let $Q_+:=[L/2,L]\times [0,\ell]$. Given another (not necessarily standard) interval $Q'$, we shall say that $Q'$ has the same shape as $Q$ in case there exists $a\in\R^2$ and $\alpha>0$ such that $Q'=a+\alpha Q$~---~calling \emph{shape} of an interval the quotient of its horizontal side by its vertical one, this is equivalent to say their shapes are equal. Given a family of standard intervals $\calQ$, we then denote by $\calB(\calQ)$ the family of all intervals in $\R^2$ having the same shape as some $Q\in\calQ$.

For $\theta\in [0,2\pi)$ we also denote by $r_\theta$ the (counterclockwise) rotation of angle $\theta$ around the origin.
Given a set $\btheta\subseteq [0,2\pi)$, we then denote by $\calB_{\btheta}(\calQ)$ the set of rectangles of the form $r_\theta Q$ for some $Q\in\calB(\calQ)$.

\subsection{Differentiation bases.} A family $\calB=\bigcup_{x\in\R^2}\calB(x)$ of measurable subsets of $\R^2$ with positive measure is called a \emph{differentiation basis} in case the following conditions hold:
\begin{enumerate}
\item[(a)] for each $x\in\R^2$ and each $B\in\calB(x)$, one has $x\in B$;
\item[(b)] for each $x\in\R^2$, one has $\inf\{\diam(B):B\in\calB(x)\}=0$.
\end{enumerate}
It is moreover called \emph{translation invariant} in case one has $\calB(x)=x+\calB(0)$ for all $x\in\R^2$, and homothecy-invariant if it is translation-invariant and if moreover $\alpha \calB(0)=\calB(0)$ for all $\alpha>0$. We shall also say, finally, that the basis $\calB$ enjoys the Buseman-Feller properties in case for any $B\in\calB$, the two properties $B\in\calB(x)$ and $x\in B$ are equivalent~---~from now on, we shall see the collections $\calB(\calQ)$ and $\calB_{\btheta}(\calQ)$ defined above as differentiation bases enjoying the Buseman-Feller property.

Associated to a differentiation basis $\calB$, there is a maximal operator $M_{\calB}$ defined by:
$$
M_\calB f(x):=\sup_{B\in\calB(x)}\frac{1}{|B|} \int_B |f|.
$$
{ When $\btheta\subseteq [0,2\pi)$ is a set of angles, and when $\calB$ is the collection $\calR_{\btheta}$ of all rectangles in the plane, one side of which makes an angle $\theta\in\btheta$ with the horizontal line, we shall briefly denote $M_\calB$ by $M_{\btheta}$.}

One also says that a differentiation basis \emph{differentiates} a function space $X\subseteq L^1_{\loc}(\R^2)$ in case for every $f\in X$, the equality:
$$
f(x)=\lim_{\begin{subarray}{c} \diam B\to 0\\ B\in\calB(x)\end{subarray}} \frac{1}{|B|} \int_B |f|,
$$
holds for a.e.\ $x\in\R^2$.

\subsection{Orlicz spaces.} For our purposes, an \emph{Orlicz function} is a convex, continuous and increasing function $\Phi:[0,\infty)\to [0,\infty)$ satisfying $\Phi(0)=0$ and $\Phi(t)\to\infty$ at $\infty$; we say that an Orlicz function $\Phi$ satisfies the $\Delta_2$ condition in case there is an absolute constant $K>0$ such that one has $\Phi(2t)\leq K\Phi(t)$ for all sufficiently large $t$. The Orlicz function $\Psi:[0,\infty)\to [0,\infty)$ defined by $\Psi(s):=\sup\{t|s|-\Phi(t):0\leq t<\infty\}$ is then called the \emph{complementary function} to $\Phi$ (a general theory of Orlicz spaces is presented the two monographies by M.~A.~Krasnosel'skii and Ya.~B.~Rutickii \cite{KR} and by M.~M.~Rao and Z.~D.~Ren \cite{RR1991}).

Given an Orlicz function $\Phi$, we let $L^\Phi(\R^2)$ 
denote the set of all measurable functions $f$ in $\R^2$ for which $\Phi(|f|)$ is integrable (for $\Phi(t)=t^p$, $1\leq p<\infty$ this yields the usual Lebesgue space 
$L^p(\R^2)$, while for $\Phi(t)=\Phi_{\beta}(t):=t(1+{\log_+}^{\beta}t)$, with $0 <\beta$, and for $\Phi(t):= t(1+\log_+ \log_{+}t)$ we get the Orlicz spaces 
$L\log^{\beta }L(\R^2)$ and $L\log \log L(\R^2)$, respectively). 

Given an Orlicz function $\Phi$, recall that a sublinear operator $T$ is said to be \emph{of weak type 
$(\Phi,\Phi)$} in case there exists a constant $C>0$ such that, for all $f\in L^\Phi(\R^2)$ and all $\alpha>0$, one has:
$$
|\{x\in\R^2:Tf(x)>\alpha\}|\leq\int_{\R^2} \Phi\left(\frac{|f|}{\alpha}\right).
$$
Whenever $\Phi(t)=t^p$ for $p\geq 1$, we shall say that $T$ has \emph{weak type $(p,p)$}. It is a fact that, for a Buseman-Feller homothecy-invariant differentiation basis $\calB$, the following two properties are equivalent for any given Orlicz function $\Phi$:
\begin{enumerate}
\item[(i)] $M_\calB$ is of weak type $(\Phi,\Phi)$;
\item[(ii)] $\calB$ differentiates $L^\Phi(\R^2)$.
\end{enumerate}
The interested reader will find the details of the latter equivalence in \cite[Chapter~III]{DEGUZMAN1975} (see in particular Remark~4, p.~90).

Finally, given an Orlicz function $\Phi$, we shall say that a differentiation basis $\calB$ \emph{differentiates exactly} $L^\Phi(\R^2)$ in cases it differentiates $L^\Phi(\R^2)$ but fails to differentiate $L^\Psi(\R^2)$ for any Orlicz function $\Psi$ satisfying $\Psi(t)=o(\Phi(t))$ at $\infty$.

\subsection{A lemma by A.~Stokolos}
The following useful lemma, which is a particular case of \cite[Lemma~A]{STOKOLOS1989}, will be useful to us in section~\ref{sec.comput}.
\begin{Lemma}\label{lem.A-Stok}
Assume that $\Phi$ is an Orlicz function satisfying the $\Delta_2$ condition and let $\calR=\bigcup_{k\in\N} \calR_k$ where, for each $k\in\N$, $\calR_k$ is a finite collection of rectangles in $\R^2$. Assume also that there exists a sequence $(\lambda_k)$ increasing to $\infty$ as $k\to\infty$, and a sequence of balls $B_k$ satisfying the following properties:
\begin{enumerate}
\item[(i)] all members of $\calR_k$ have equal area;
\item[(ii)] for any finite collection $\calS\subseteq\calR_k$, one has:
$$
\int_{\R^2} \Psi\left(\sum_{R\in\calS} \chi_R\right)\leq c_1\sum_{R\in\calS} |R|,
$$
where $\Psi$ denotes the complementary function to $\Phi$;
\item[(iii)] for any $R\in\calR_k$, one has:
$$
\frac{|R\cap B_k|}{|R|}\geq \frac{c_2}{\lambda_n}\ ;
$$
\item[(iv)] $|\cup\calR_k|\geq c_3\Phi(\lambda_k)|E_k|$;
\end{enumerate}
here, $c_1>0$, $c_2>0$ and $c_3>0$ are constants. Under those assumptions, there exists a differentiation basis $\calB$ satisfying $\calB\subseteq\calB(\calR)$ that differentiates \emph{precisely} $L^\Phi(\R^2)$.
\end{Lemma}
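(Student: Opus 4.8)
The plan is to reduce the statement of Lemma~\ref{lem.A-Stok} to the full strength of \cite[Lemma~A]{STOKOLOS1989} by checking that our hypotheses (i)--(iv) are a specialization of Stokolos' assumptions, and then to quote his conclusion. Concretely, Stokolos' Lemma~A asserts that if one has a sequence of finite families $\calR_k$ of rectangles, a sequence $\lambda_k\uparrow\infty$, and auxiliary sets $B_k$ (resp.\ $E_k$) satisfying an equimeasurability condition on each level, a covering-multiplicity estimate phrased through the complementary function $\Psi$, a lower bound on the relative overlap $|R\cap B_k|/|R|$, and a lower bound $|\cup\calR_k|\geq c_3\Phi(\lambda_k)|E_k|$ on the total measure, then one can extract from $\calB(\calR)$ a subbasis differentiating $L^\Phi$ exactly. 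Our hypotheses are literally these, so the first step is simply to set up the dictionary between the two sets of notation (in particular matching $B_k$ with $E_k$ appropriately, since the statement as printed uses both symbols, and noting that $\lambda_n$ in (iii) should read $\lambda_k$).

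Next I would recall the mechanism behind Stokolos' argument, so as to make the quotation self-contained for the reader: the positive part (that $\calB$ differentiates $L^\Phi(\R^2)$) follows from a Calder\'on--Zygmund-type weak $(\Phi,\Phi)$ bound for the maximal operator $M_{\calB(\calR)}$, which in turn uses hypothesis (ii) --- the $\Psi$-Orlicz estimate on sums of characteristic functions is exactly the geometric input needed to control the overlap of the selected rectangles when running the linearization/duality argument for the maximal function, with $\Phi$ and $\Psi$ conjugate --- together with the $\Delta_2$ assumption, which guarantees $L^\Phi(\R^2)$ is well behaved (reflexivity/separability issues and the equivalence of Luxemburg and Orlicz norms). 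The negative part (that one may simultaneously arrange failure of differentiation of every $L^\Psi$ with $\Psi=o(\Phi)$) is where (iii) and (iv) enter: they furnish, for each $k$, a function supported on $\cup\calR_k$ whose $M_{\calB}$ is large on a set of measure comparable to $|\cup\calR_k|$, while the $L^\Psi$-norm of the function is small precisely because $\Psi(\lambda_k)/\Phi(\lambda_k)\to 0$; a Baire-category or direct diagonal construction over $k$ then produces a single $L^\Psi$ function that is not differentiated. One then selects the subbasis $\calB\subseteq\calB(\calR)$ by keeping, at each scale, only the rectangles actually used in these two constructions.

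The main obstacle, as I see it, is not any new mathematics but the bookkeeping of verifying that the present formulation is genuinely a special case of \cite[Lemma~A]{STOKOLOS1989} rather than a strengthening: Stokolos states his lemma for rectangles whose sides are parallel to the coordinate axes or make dyadic angles, and one must be sure that (a) he nowhere uses more than the four listed properties, and (b) the conclusion ``differentiates precisely $L^\Phi$'' in his paper matches our ``differentiates exactly $L^\Phi$'' as defined above (failure of $L^\Psi$ for all $\Psi=o(\Phi)$, not merely for one such $\Psi$). Assuming both checks go through --- and they do, since Stokolos' proof only ever manipulates areas, overlaps, and the conjugate pair $(\Phi,\Psi)$, never the orientation of the rectangles --- the proof of Lemma~\ref{lem.A-Stok} consists of this reduction followed by a direct citation, and I would present it in exactly that form, emphasizing that we shall apply it in section~\ref{sec.comput} with the families $\calR_k$ built from the geometric constructions of sections~\ref{sec.geom} and~\ref{sec.bad}.
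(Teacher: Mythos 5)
Your proposal is correct and follows the same route as the paper: the paper itself introduces this lemma with the single remark that it is ``a particular case of \cite[Lemma~A]{STOKOLOS1989}'' and offers no further proof, so reducing to Stokolos' Lemma~A and citing it is exactly the intended argument. You also rightly flag the typographical slips in the printed statement ($\lambda_n$ for $\lambda_k$ in~(iii), and the unintroduced $E_k$ in~(iv) which should be the ball $B_k$), which is a useful sanity check that the dictionary with Stokolos' hypotheses lines up.
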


{Let us now move on to the exposition of our results. Before to discuss the type of statements contained in Theorem~\ref{thm.main}, let us first formulate our observation concerning the boundedness in $L^p$, $1<p\leq\infty$, of the maximal operator $M_{\btheta}$.

\section{Boundedness of the directional maximal operator in $L^p$ and $L^\infty$ spaces}\label{sec.bateman}

We announced the forthcoming proposition in the introduction; the new thing here is that one can include $p=\infty$ in its statement.
\begin{Proposition}
Let ${\btheta}$ be as above and let $M_{\btheta}$ be the directional maximal operator associated to $\btheta$ defined above. Then the following dichotomy holds:
\begin{enumerate}
\item Either $M_{\btheta}$ is bounded on $L^p(\R^2)$ for all $1<p< \infty$, in which case the associated differentiation basis $\calB$ differentiates all $L^p(\R^2)$, $1<p\leq\infty$;
\item or $M_{\btheta}$ is unbounded on all $L^p(\R^2)$, $1<p< \infty$; in this case the associated differentiation basis $\calB$ fails to differentiate $L^p(\R^2)$ for all $1<p\leq\infty$.
\end{enumerate}
\end{Proposition}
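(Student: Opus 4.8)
The plan is to reduce everything to Bateman's characterization \cite{BATEMAN}: $M_{\btheta}$ is bounded on $L^p(\R^2)$ for \emph{all} $1<p<\infty$ if and only if $\btheta$ can be covered by finitely many $N$-lacunary sets, and otherwise $M_{\btheta}$ is bounded on \emph{no} $L^p(\R^2)$ with $1<p<\infty$. This already gives the stated dichotomy for the range $1<p<\infty$; the only genuinely new point is to settle the endpoint $p=\infty$, and to transfer the (un)boundedness statements to differentiation statements. For the differentiation part, recall that $\calB=\calR_{\btheta}$ is a Buseman--Feller, homothecy-invariant basis, so by the general equivalence recalled above (de Guzman, \cite[Chapter~III]{DEGUZMAN1975}), weak type $(p,p)$ of $M_{\btheta}$ is equivalent to $\calB$ differentiating $L^p(\R^2)$ for each fixed $p\in(1,\infty)$; and strong type $(p,p)$ trivially implies weak type $(p,p)$. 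So in case (1), $\calB$ differentiates $L^p(\R^2)$ for all $1<p<\infty$, and in case (2) it fails to differentiate $L^p(\R^2)$ for all $1<p<\infty$ (the failure of weak type $(p,p)$ here being, in fact, the failure of differentiation, which is what Bateman's negative direction actually produces).

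For $p=\infty$ in case (1): the bound $\|M_{\btheta}f\|_\infty\le\|f\|_\infty$ is immediate from the definition of $M_{\btheta}$ as a supremum of averages, with constant $1$, for \emph{any} $\btheta$; so boundedness on $L^\infty$ is automatic and not the interesting statement. The substantive claim is that $\calB$ differentiates $L^\infty(\R^2)$ in case (1). Here I would argue directly: the key structural fact behind Bateman's theorem (and already behind Cordoba--Fefferman and Nagel--Stein--Wainger) is that when $\btheta$ is a finite union of $N$-lacunary sets, $M_{\btheta}$ satisfies a good $L^2$ (or $L^p$) bound, and in particular differentiation holds on $L^1_{\loc}$ restricted to, say, $L^2$; since $L^\infty_{c}(\R^2)\subseteq L^2(\R^2)$ and differentiation is a local statement, differentiation of $L^2(\R^2)$ yields differentiation of $L^\infty(\R^2)$. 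More carefully: given $f\in L^\infty(\R^2)$ and a point $x$, replace $f$ by $f\ch{B(x,1)}\in L^2(\R^2)$; since all sufficiently small rectangles in $\calB(x)$ lie inside $B(x,1)$, the limit defining differentiation at $x$ is unchanged, and it equals $f(x)$ for a.e.\ $x$ by differentiation of $L^2$. This handles (1).

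For $p=\infty$ in case (2): here the point is that the \emph{failure} of differentiation of every $L^p$, $1<p<\infty$, forces the failure of differentiation of $L^\infty$. The cleanest route is via the structure of Bateman's counterexample, or more robustly via a soft monotonicity/density argument. I would use the following: if $\calB$ differentiated $L^\infty(\R^2)$, then in particular it would differentiate the dense-in-$L^p$ class of bounded compactly supported functions; combined with a weak-type or covering estimate this would propagate to all of $L^p$. More precisely, the standard argument (de Guzman, \cite[Chapter~III]{DEGUZMAN1975}) shows that for a Buseman--Feller homothecy-invariant basis, differentiation of \emph{any} $L^p(\R^2)$, $1\le p<\infty$ --- equivalently of $L^\infty(\R^2)$, since $L^\infty$ is not contained in any $L^p$ but bounded compactly supported functions are dense in each $L^p$ and form a differentiation-determining class together with the basis being Buseman--Feller --- is in fact equivalent to $M_{\btheta}$ being of weak type $(p,p)$ for that $p$. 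Thus differentiation of $L^\infty$ would give weak type $(p,p)$ for, say, $p=2$, hence (Bateman) weak type $(q,q)$ for all $q\in(1,\infty)$, contradicting case (2). I should double-check that differentiation of $L^\infty$ implies differentiation of some $L^p$ for a basis of this type: the cleanest formulation is that the halo/covering function of the basis is finite iff differentiation holds, and since the halo function controls all $L^p$ behavior uniformly, finiteness detected at one exponent gives it at all; I would cite de Guzman for this and reduce to it.

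The main obstacle I anticipate is the endpoint transfer in case (2): showing that the negative conclusion of Bateman's theorem, which is naturally phrased as failure of boundedness on $L^p$ for $1<p<\infty$, also yields failure of differentiation of $L^\infty(\R^2)$. The safest way to close this is to recall from de Guzman's monograph the precise equivalence between differentiation of a function space and the corresponding maximal-function estimate for Buseman--Feller homothecy-invariant bases, and to observe that $L^\infty$ differentiation for such a basis is, via truncation to balls, equivalent to differentiation of $L^p_{\loc}(\R^2)$ for every (equivalently, some) $p\in[1,\infty)$; then case (2) excludes all $p\in(1,\infty)$ and we are done. I will also need the trivial but worth-stating remark that $L^1(\R^2)$-differentiation need not be included in this dichotomy, consistent with Theorem~\ref{thm.main}(i), and that in case (1) the basis may still fail to differentiate $L^1(\R^2)$; the proposition as stated only asserts things about $1<p\le\infty$, so nothing further is needed there.
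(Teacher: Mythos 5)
Your handling of the positive case is fine and matches the paper: boundedness on some $L^p$ gives differentiation of $L^p$ (de Guzm\'an's equivalence for Busemann--Feller homothecy-invariant bases), and the localization/truncation argument upgrades this to differentiation of $L^\infty$ since $L^\infty_{\loc}\subseteq L^p_{\loc}$.

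The genuine gap is in your treatment of case (2) at $p=\infty$. You claim that for a Busemann--Feller homothecy-invariant basis, differentiation of $L^\infty$ is \emph{equivalent} to differentiation of some $L^p$ with $p<\infty$, and hence to weak type $(p,p)$; you yourself flag that you ``should double-check'' this and wave at a halo-function argument. This claim is not correct as stated. The implication runs only one way: differentiation of $L^p$ forces differentiation of $L^\infty$, but $L^\infty$-differentiation of such a basis is equivalent (de Guzm\'an, Theorem~1.2, p.~69) only to a one-scale Tauberian covering condition of the form $|\{M_\calB\chi_A>\lambda\}|\le c\,|A|$ for a single fixed $\lambda$; that is strictly weaker than a weak-type $(p,p)$ bound $\phi_\calB(u)\lesssim u^p$ for some finite $p$. ``Finiteness of the halo function at one exponent gives it at all'' is precisely what does not follow. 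So your proposed route --- $L^\infty$-differentiation $\Rightarrow$ weak $(p,p)$ $\Rightarrow$ Bateman $\Rightarrow$ contradiction --- rests on an unjustified (and in general false) equivalence.

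The paper avoids this issue by using exactly the Tauberian estimate that $L^\infty$-differentiation actually does provide, and by contradicting \emph{it} directly with Bateman's Kakeya-set output. Concretely: from ``unbounded on some $L^p$'' Bateman gives, for each $N$, a family $\calR_N\subset\calR_{\btheta}$ with $|\cup\calR_N|\le \tfrac1N|\cup_{R\in\calR_N}R^*|$, where $R^*$ is $R$ stretched by a factor $3$ in length. Setting $A_N=\cup\calR_N$, every $x\in\cup R^*$ satisfies $M_{\btheta}\chi_{A_N}(x)\ge \tfrac{|R_x\cap R_x^*|}{|R_x^*|}=\tfrac13>\tfrac14$, so $|\cup R^*|\le|\{M_{\btheta}\chi_{A_N}>\tfrac14\}|$. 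If $M_{\btheta}$ were bounded on some $L^q$ then $\calR_{\btheta}$ would differentiate $L^q$ and hence $L^\infty$, giving the Tauberian estimate $|\{M_{\btheta}\chi_A>\tfrac14\}|\le c|A|$; combined with the Kakeya estimate this yields $N|A_N|\le c|A_N|$ for all $N$, a contradiction. This is the short explicit computation your argument is missing; it replaces the unsupported claim about $L^\infty$-differentiation implying weak $(p,p)$, and simultaneously delivers failure of $L^\infty$-differentiation in case (2).
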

\begin{proof}
Assuming that $M_{\btheta}$ is unbounded on \emph{some} $L^p(\R^2)$, $1<p<\infty$, it follows from Bateman \cite[Theorem~1, p.~56]{BATEMAN} that ${\btheta}$ admits Kakeya sets, \emph{i.e.} that for each $N\in\N^*$ there exists a collection $\calR_N\subseteq\calR_{\btheta}$ verifying:
\begin{equation}\label{eq.kak}
|\cup\calR_N|\leq \frac 1N \left|\bigcup_{R\in\calR_N} R^*\right|,
\end{equation}
where one denoted by $R^*$ the rectangle having the same center and width as $R$ but three times its length (we hence assume without loss of generality that none of the $R$'s is a square).

We show that $M_{\btheta}$ is unbounded on all $L^p(\R^2)$ by contradiction. Assume thus that there exists some $1<q<\infty$ such that $M_{\btheta}$ is bounded on $L^q(\R^2)$. Since $\calR_{\btheta}$ is a Busemann-Feller differentiation basis invariant with respect to similarities (translations and homothecies), we know, according to \emph{e.g.} de Guzm\'an \cite[p.~90]{DEGUZMAN1975}, that $\calR_{\btheta}$ would then differentiate $L^q(\R^2)$, and hence also $L^\infty(\R^2)$. It would then follow from de Guzm\'an \cite[Theorem~1.2, p.~69]{DEGUZMAN1975} that there exists a constant $c>0$ such that for any bounded measurable set $A\subseteq\R^2$, one has:
\begin{equation}\label{eq.dg}
\left|\left\{M_{\btheta} \chi_A>\frac 14\right\}\right|\leq c|A|.
\end{equation}
Define, for $N\in\N^*$, $A_N:=\cup\calR_N$. Observe that for any $x\in B_N:=\bigcup_{R\in\calR_N} R^*$, there exists some $R_x\in\calR_N\subseteq\calR_{\btheta}$ such that one has $x\in R_x^*$. Now compute:
$$
M_{\btheta} \chi_{A_N}(x)\geq \frac{1}{|R_x^*|}\int_{R_x^*}\chi{A_N}\geq \frac{1}{|R_x^*|}\int_{R_x^*}\chi{R_x}=\frac{|R_x\cap R_x^*|}{|R_x^*|}=\frac 13 >\frac 14.
$$
We hence have $B_N\subseteq \left\{M_{\btheta} \chi_{A_N}>\frac 14\right\}$; using (\ref{eq.kak}) and (\ref{eq.dg}) we then get:
$$
N|A_N|\leq |B_N|\leq c |A_N|,
$$
which is a contradiction for sufficiently large $N$. We conclude that $M_{\btheta}$ is unbounded on all $L^q(\R^2)$, $1<q<\infty$, and that $\calR_{\btheta}$ fails to differentiate $L^\infty(\R^n)$.
\end{proof}
}

We now proceed towards the proof of Theorem~\ref{thm.main} by starting with some geometrical preliminaries that were announced in the introduction.
\section{Some geometrical preliminaries}\label{sec.geom}

The following straightforward geometrical fact is borrowed from \cite{MOONENS2016}.
\begin{Lemma}\label{cl.1} 
Fix real numbers $0\leq\vartheta<\theta<\frac{\pi}{2}$ and $0<2\ell<L$ and let $Q:=[0,L]\times [0,\ell]$. If moreover one has 
$\tan(\theta-\vartheta)\geq 1/\sqrt{\frac 14 \left( \frac{L}{\ell}\right)^2-1}$, 
then $r_\vartheta Q_+$ and $r_\theta Q_+$ are disjoint.
\end{Lemma}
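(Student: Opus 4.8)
The plan is to reduce to the case $\vartheta=0$ by a rotation, rewrite the hypothesis as a clean inequality on $\sin(\theta-\vartheta)$, and then separate the two rectangles by an explicit horizontal line.

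First I would apply the rotation $r_{-\vartheta}$, which is a bijection of the plane: $r_\vartheta Q_+$ and $r_\theta Q_+$ are disjoint if and only if $Q_+$ and $r_\psi Q_+$ are disjoint, where $\psi:=\theta-\vartheta\in(0,\tfrac{\pi}{2})$. Note that $2\ell<L$ forces $\tfrac14(L/\ell)^2-1>0$, so the right-hand side of the hypothesis is well defined, and on $(0,\tfrac{\pi}{2})$ the functions $\sin$, $\cos$, $\tan$ are positive.

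Next I would rewrite the hypothesis in a usable form. Since $\psi\mapsto\tan\psi$ is increasing on $(0,\tfrac{\pi}{2})$ and every quantity involved is positive, squaring the inequality $\tan\psi\ge 1/\sqrt{\tfrac14(L/\ell)^2-1}$ and clearing denominators shows it is equivalent to $\tfrac14(L/\ell)^2\sin^2\psi\ge 1$, that is, to
$$
\sin\psi\ge\frac{2\ell}{L}.
$$
So it suffices to prove: if $\sin\psi\ge 2\ell/L$, then $Q_+\cap r_\psi Q_+=\emptyset$. For the separation itself, observe that every point of $Q_+=[L/2,L]\times[0,\ell]$ has second coordinate $\le\ell$, whereas a point of $r_\psi Q_+$ is of the form $(x\cos\psi-y\sin\psi,\;x\sin\psi+y\cos\psi)$ with $L/2\le x\le L$ and $0\le y\le\ell$, so, since $\sin\psi,\cos\psi>0$, its second coordinate is at least $\tfrac{L}{2}\sin\psi\ge\ell$. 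Hence any point lying in both rectangles must have second coordinate exactly $\ell$; but then it is forced to be the image of the corner $(L/2,0)$, namely $(\tfrac{L}{2}\cos\psi,\ell)$, whose first coordinate is $\tfrac{L}{2}\cos\psi<L/2$, so it does not belong to $Q_+$ — a contradiction.

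I do not expect a serious obstacle: the only point needing care is the boundary (equality) case $\sin\psi=2\ell/L$, where the two rectangles \emph{a priori} share a range of second coordinates, and one must rule out the single candidate point on the line $y=\ell$, exactly as above. The elementary algebra showing that the stated hypothesis is equivalent to $\sin(\theta-\vartheta)\ge 2\ell/L$ is the other thing to get right, but it is routine.
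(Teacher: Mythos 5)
The paper does not prove this lemma itself; it simply states that the fact is ``borrowed from \cite{MOONENS2016}.'' So there is no in-paper argument to compare against, and your proof should be judged on its own. It is correct. The reduction $\psi:=\theta-\vartheta$ is legitimate since rotations are measure- and intersection-preserving bijections, the algebraic equivalence $\tan\psi\geq 1/\sqrt{\tfrac14(L/\ell)^2-1}\iff\sin\psi\geq 2\ell/L$ is right (with $s=\sin^2\psi$ and $A=\tfrac14(L/\ell)^2-1>0$, the inequality $s/(1-s)\geq 1/A$ is equivalent to $s\geq 1/(A+1)=4(\ell/L)^2$), and the separation argument is clean: $Q_+$ lies below the line $\{y=\ell\}$ while $r_\psi Q_+$ lies above it because the second coordinate of $r_\psi(x,y)$ is $x\sin\psi+y\cos\psi\geq\tfrac L2\sin\psi\geq\ell$. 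Your treatment of the boundary case $\sin\psi=2\ell/L$ is the only delicate point and you handle it correctly: equality forces $x=L/2$, $y=0$, giving the single candidate $(\tfrac L2\cos\psi,\ell)$, which has first coordinate strictly less than $L/2$ since $\cos\psi<1$ for $\psi\in(0,\tfrac\pi2)$. This is exactly the kind of elementary geometric argument the paper's phrase ``straightforward geometrical fact'' suggests.
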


\begin{Lemma}\label{A} Let $\{{\theta}_{j}\} \subset [0, \frac{\pi}{4}]$ be a decreasing sequence such that, letting $m_{j} : = \tan {\theta}_{j}$, we have that there exist 
a constant $ C > 0$, 
a constant $0 < \zeta < 1$ and a sequence $\{t_{k}\}$  so that, for each $k$, for integers $0 \leq j < k$,
$$m_{j} - m_{k} \geq C {\zeta}^{t_{k}}.$$  
Then, there exist constants $d(C) > c(C) >0$ and $e(C)>0$ depending only on $C$ such that, for each $\epsilon>0$ and each integer $k \in {\Bbb N}^{\star} = {\Bbb N} \setminus \{0\}$, 
one can find a standard interval $Q_k=[0,L_k]\times [0,\ell_k]$ and a subset $\btheta_k=(\theta_0,\dots,\theta_k)\subset\btheta$ satisfying $\#\btheta_k =  k+1 $ 
and such that the following hold:
\begin{enumerate}
\item[(i)] $0 < 2 \ell_k < L_k\leq \epsilon$;
\item[(ii)] $ c(C) {\zeta}^{- t_{2k}} \leq\frac{L_k}{\ell_k}\leq d(C) {\zeta}^{- t_{2k}}$;
\item[(iii)] $\left| \bigcup_{\theta\in\btheta_k} r_{\theta} Q_k\right|\geq \frac{k}{2} |Q_k|$.
\item[(iv)] for any subset $\btheta'=(\theta_{i_0},\dots,\theta_{i_l})\subseteq\btheta_k$ ($0\leq l\leq k$, $i_0<i_1<\cdots<i_l$) and any nonnegative, Borel function $\varphi:\R_+\to\R_+$ satisfying $\varphi(0)=0$, one has:
$$
\int_{\R^2} \varphi\left(\sum_{\theta\in\btheta'}\chi_{r_\theta Q_k}\right)\leq e(C) |Q_k| \zeta^{t_{2k}} \sum_{j=0}^{l}\varphi(j+1)\sum_{r=j}^l \zeta^{-t_{i_r}}.
$$
\end{enumerate}
\end{Lemma}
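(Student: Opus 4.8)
The plan is to construct the interval $Q_k$ and the angle set $\btheta_k$ by hand, choosing the aspect ratio $L_k/\ell_k$ to sit precisely at the threshold governed by Lemma~\ref{cl.1}. Concretely, I would pick the $k+1$ angles $\theta_0>\theta_1>\dots>\theta_k$ to be the $k+1$ \emph{largest} terms of the sequence below some index depending on $k$; the natural choice, consistent with clause (ii), is to take $\btheta_k=\{\theta_0,\dots,\theta_k\}$ after re-indexing so that the relevant separation estimate reads $m_j-m_{j'}\ge C\zeta^{t_{2k}}$ whenever $j<j'\le 2k$ (this is where the exponent $t_{2k}$, rather than $t_k$, enters: one needs room for $2k$ indices so that consecutive chosen angles are still well separated). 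Then I would set
$$
\frac{L_k}{\ell_k}:=2\sqrt{1+\frac{1}{C^2}\,\zeta^{-2t_{2k}}}\,,
$$
and choose $\ell_k>0$ small enough that $L_k\le\epsilon$; this immediately gives (i), and gives (ii) with $c(C),d(C)$ absorbing the constant $2$ and the crossover between $\sqrt{1+x}$ and $\sqrt{x}$ for large $x$ (note $\zeta^{-t_{2k}}\to\infty$, so the square root is comparable to $\tfrac1C\zeta^{-t_{2k}}$ up to constants depending only on $C$).

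With this aspect ratio, for any two chosen angles $\vartheta=\theta_{j'}<\theta=\theta_j$ one has $\tan(\theta-\vartheta)\ge \tan\theta_j-\tan\theta_{j'}$-type control — more carefully, using $\tan(\theta-\vartheta)=\frac{m_j-m_{j'}}{1+m_jm_{j'}}\ge \tfrac12(m_j-m_{j'})\ge \tfrac12 C\zeta^{t_{2k}}$ (the $m$'s are bounded by $1$ since $\theta_j\le\pi/4$), and $1/\sqrt{\tfrac14(L_k/\ell_k)^2-1}=C\zeta^{t_{2k}}$ by the choice above — hence after possibly doubling the constant in the definition of $L_k/\ell_k$, Lemma~\ref{cl.1} applies and the right halves $r_{\theta_j}(Q_k)_+$ are \emph{pairwise disjoint}. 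Since each $|r_{\theta}(Q_k)_+|=\tfrac12|Q_k|$ and there are $k+1$ of them, $\big|\bigcup_{\theta\in\btheta_k}r_\theta Q_k\big|\ge\big|\bigcup_j r_{\theta_j}(Q_k)_+\big|=\frac{k+1}{2}|Q_k|\ge\frac k2|Q_k|$, which is (iii).

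The real work is clause (iv), which quantifies how much the rotated rectangles $r_\theta Q_k$ \emph{overlap} near the pivot (their common lower-left vertex, the origin). The key geometric estimate I would establish is: for $i<i'$ among the chosen indices,
$$
|r_{\theta_{i}}Q_k\cap r_{\theta_{i'}}Q_k|\ \lesssim\ \frac{|Q_k|}{(L_k/\ell_k)(m_{i}-m_{i'})}\ \lesssim\ \zeta^{t_{2k}}\,\zeta^{-t_{i'}}\,|Q_k|,
$$
the point being that away from the pivot the two long thin rectangles separate at angular rate $\sim m_i-m_{i'}$, so the overlap is a small triangle/parallelogram near the origin whose area is controlled by the reciprocal of the angular gap times the reciprocal of the aspect ratio. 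More usefully, I would show the \emph{layered} bound: for each fixed $\theta_{i_r}$ in the subfamily $\btheta'$, the set of points lying in $r_{\theta_{i_r}}Q_k$ \emph{and} in at least $j$ of the rectangles $r_{\theta_{i_{r+1}}}Q_k,\dots,r_{\theta_{i_l}}Q_k$ (those with strictly larger index, hence \emph{closer} angles) has measure $\lesssim |Q_k|\zeta^{t_{2k}}\sum_{s\ge\,?}\zeta^{-t_{i_s}}$. Writing $\varphi(\sum\chi)=\sum_j(\varphi(j+1)-\varphi(j))\chi_{\{\sum\chi\ge j+1\}}$ (valid since $\varphi(0)=0$), summing over the level sets, and reorganizing the double sum into the stated form $\sum_{j=0}^l\varphi(j+1)\sum_{r=j}^l\zeta^{-t_{i_r}}$ gives (iv); telescoping $\varphi(j+1)-\varphi(j)$ against the nested level sets is exactly what produces the $\sum_{r=j}^l$ tail.

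The main obstacle is the precise geometric overlap estimate in the previous paragraph: controlling $|r_{\theta}Q_k\cap r_{\vartheta}Q_k\cap\dots|$ uniformly, with the \emph{correct} dependence on both the aspect ratio $L_k/\ell_k$ and the angular gaps $m_i-m_{i'}$, and then bookkeeping the combinatorics so the exponents come out as $\zeta^{t_{2k}}\zeta^{-t_{i_r}}$ summed over the tail $r\ge j$ rather than something lossier. I expect this to follow by placing coordinates at the pivot, parametrizing each $r_{\theta_j}Q_k$ in its own rotated frame, and estimating, for a point at distance $\rho$ from the pivot, how many of the (ordered) angles $\theta_{i_r}$ can simultaneously cover it — this count drops like $1/(\rho\cdot\text{gap})$, and integrating in $\rho$ from $0$ to $L_k$ produces the $\sum\zeta^{-t_{i_r}}$ tails after using the hypothesis $m_j-m_{j'}\ge C\zeta^{t_{2k}}$ repeatedly with the separation between consecutive chosen indices. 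Everything else — (i), (ii), (iii) — is a direct consequence of the aspect-ratio choice and Lemma~\ref{cl.1}.
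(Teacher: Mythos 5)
Your treatment of (i)--(iii) matches the paper's: you pick the aspect ratio so that $1/\sqrt{\tfrac14(L_k/\ell_k)^2-1}$ is comparable to $C\zeta^{t_{2k}}$, invoke Lemma~\ref{cl.1} to get that the half-rectangles $r_{\theta_j}(Q_k)_+$ are pairwise disjoint, and sum the $k+1$ halves. (One small point: the paper does no re-indexing; $t_{2k}$ is simply the paper's choice of aspect-ratio parameter, and the pairwise separation $m_j-m_{j'}\geq C\zeta^{t_{j'}}\geq C\zeta^{t_{2k}}$ for $j<j'\leq k$ already follows from the hypothesis plus monotonicity of $(t_k)$ in the applications.)

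The gap is in (iv), and you correctly identify it yourself: the ``bookkeeping'' that should produce the tail $\sum_{r=j}^{l}\zeta^{-t_{i_r}}$ is exactly what you leave open (the ``$\sum_{s\geq\,?}$'' and the speculative integration in $\rho$). The missing ingredient is a purely combinatorial observation, which is what makes the paper's computation go through cleanly. Because all the rectangles $r_{\theta}Q_k$ share the vertex at the origin and are obtained from one another by rotation, one has
$$
\bigcap_{i=0}^{p} r_{\alpha_{r_i}}Q_k \;=\; r_{\alpha_{r_0}}Q_k\cap r_{\alpha_{r_p}}Q_k
$$
for any $r_0<\cdots<r_p$: a point lying in the two extreme-angle rectangles automatically lies in all the intermediate ones. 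Writing $\chi=\sum_{\theta\in\btheta'}\chi_{r_\theta Q_k}$, this forces the level set $\{\chi=j+1\}$ to consist of points that belong to \emph{consecutive} runs of $j+1$ angles: if $x$ belongs to $r_{\alpha_{r_0}}Q_k,\dots,r_{\alpha_{r_j}}Q_k$ with $r_0<\cdots<r_j$ and the $r_i$ are not consecutive, then in fact $\chi(x)\geq 1+r_j-r_0>j+1$. Hence
$$
\{\chi=j+1\}\subseteq\bigcup_{s=0}^{l-j}\bigl(r_{\alpha_s}Q_k\cap r_{\alpha_{s+j}}Q_k\bigr),
$$
and each pairwise intersection is a thin region near the pivot of area $\leq \ell_k^2/\tan(\alpha_s-\alpha_{s+j})\leq (2/C)\,\ell_k^2\,\zeta^{-t_{i_{s+j}}}$. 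Re-indexing $r=s+j$ and using $\ell_k^2=|Q_k|\,(\ell_k/L_k)\leq |Q_k|\,\zeta^{t_{2k}}/c(C)$ produces $|\{\chi=j+1\}|\leq e(C)|Q_k|\zeta^{t_{2k}}\sum_{r=j}^{l}\zeta^{-t_{i_r}}$, and then simply $\int\varphi(\chi)=\sum_{j=0}^{l}\varphi(j+1)\,|\{\chi=j+1\}|$ (using $\varphi(0)=0$) gives (iv). Your pairwise-overlap estimate is correct and is the same as the paper's, but without the consecutive-run observation the ``layered'' argument you sketch does not determine the summation range and would require a separate (and likely lossier) count of how many indices can cover a point at distance $\rho$; the paper avoids that entirely.
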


\begin{proof}
To prove this lemma, observe first that letting $m_j:=\tan \theta_j$ for all $j\geq 0$, we have $m_j\leq 1$ for all $j$, 
so that one can compute, for integers $0\leq j<k$:
$$
\tan(\theta_j-\theta_k)=\frac{m_j-m_k}{1+m_jm_k}\geq\frac 12 (m_j-m_k) \geq \frac{C}{2} {\zeta}^{t_{k}}.$$

Now choose, for all $k$, real numbers $0 < 2 l < L \leq \epsilon$ (we write $L$ and $\ell$ instead of $L_k$ and $\ell_k$ here, 
for the index $k$ remains constant all through the proof) satisfying:
$$
\left(\frac{L}{\ell}\right)^2= 4 + 16 C^{-2} \zeta^{-2t_{2k}}.
$$

It is clear that one has:
$$\frac{L}{\ell}= 2 \zeta^{- t_{2k}} \sqrt{\zeta^{2 t_{2k}} + 4 C^{-2}},$$
so that (i) and (ii) hold if we take, for example, $c(C) :=  2 \sqrt{ 4 C^{-2}} = \frac{4}{C}$ and $d(C):= 2 \sqrt{1 + 4 C^{-2}}$. \\
In order to show (iii), define $Q:=[0,L]\times [0,\ell]$ and observe that one has:
\begin{eqnarray*}
\tan (\theta_j-\theta_k) & \geq &  \frac{C}{2} {\zeta}^{t_{2k}} \\
 & = & \frac{C}{2} \frac{2}{C}  \frac{1}{\sqrt{\frac 14 \left(\frac{L}{\ell}\right)^2-1}}\\
& = &  \frac{1}{\sqrt{\frac 14 \left(\frac{L}{\ell}\right)^2-1}},\\
\end{eqnarray*}
for all integers $j<k$ with $k \in {\Bbb N}^{\star}$. According to Lemma~\ref{cl.1}, this ensures that the family $\{r_{\theta_j}Q_+:j\in\N, 0\leq j\leq k\}$ 
consists of pairwise disjoints sets; in particular we get:
$$ \left| \bigcup_{j= 0}^{k} r_{\theta_j} Q \right| \geq \left| \bigsqcup_{j= 0}^{k} r_{\theta_j} Q_+\right| =   k \frac{|Q|}{2},$$
(we used $\sqcup$ to indicate a disjoint union), which proves (iii).

\begin{figure}[t]
\includegraphics[width=7cm]{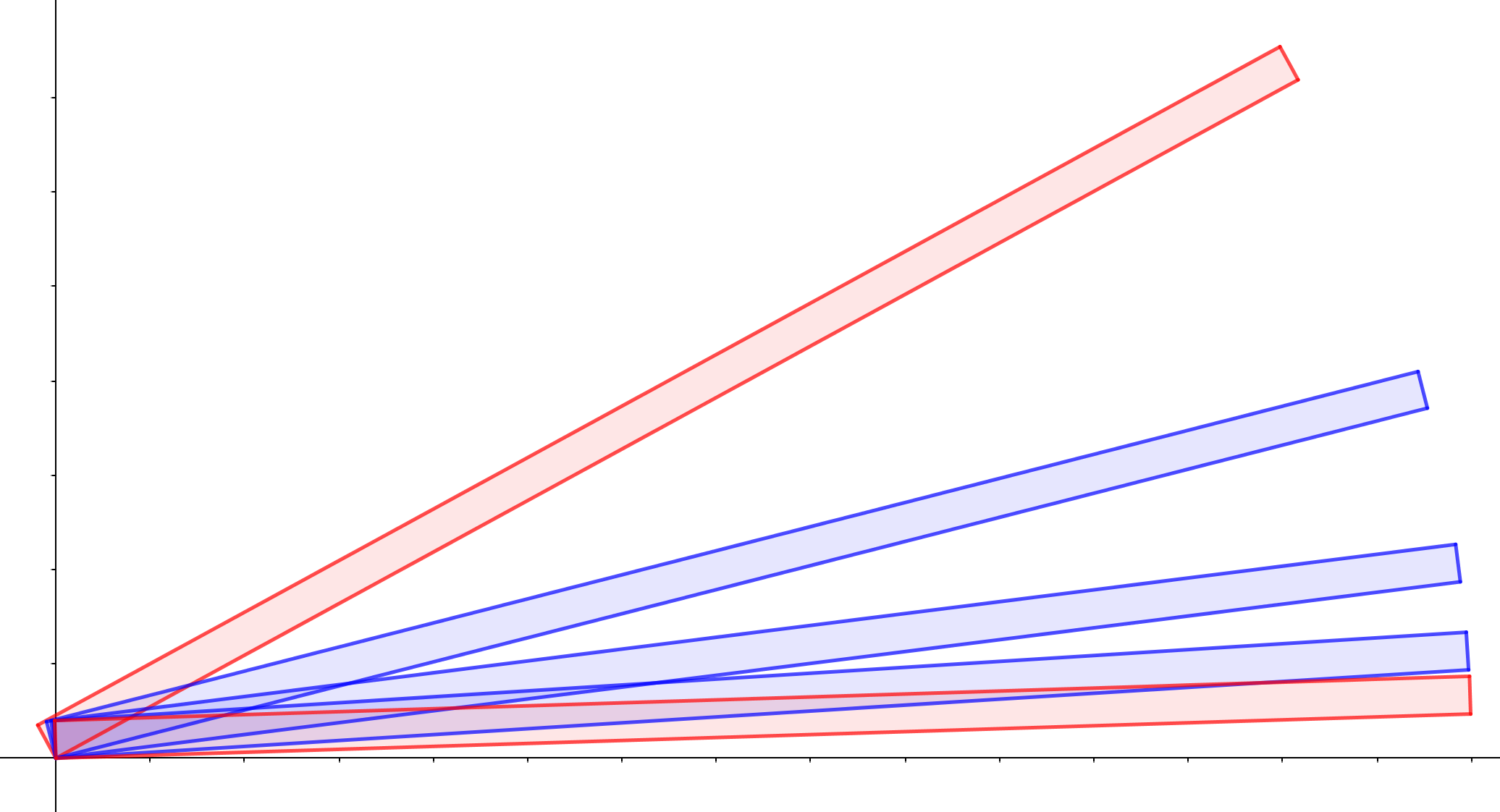}
\includegraphics[width=7cm]{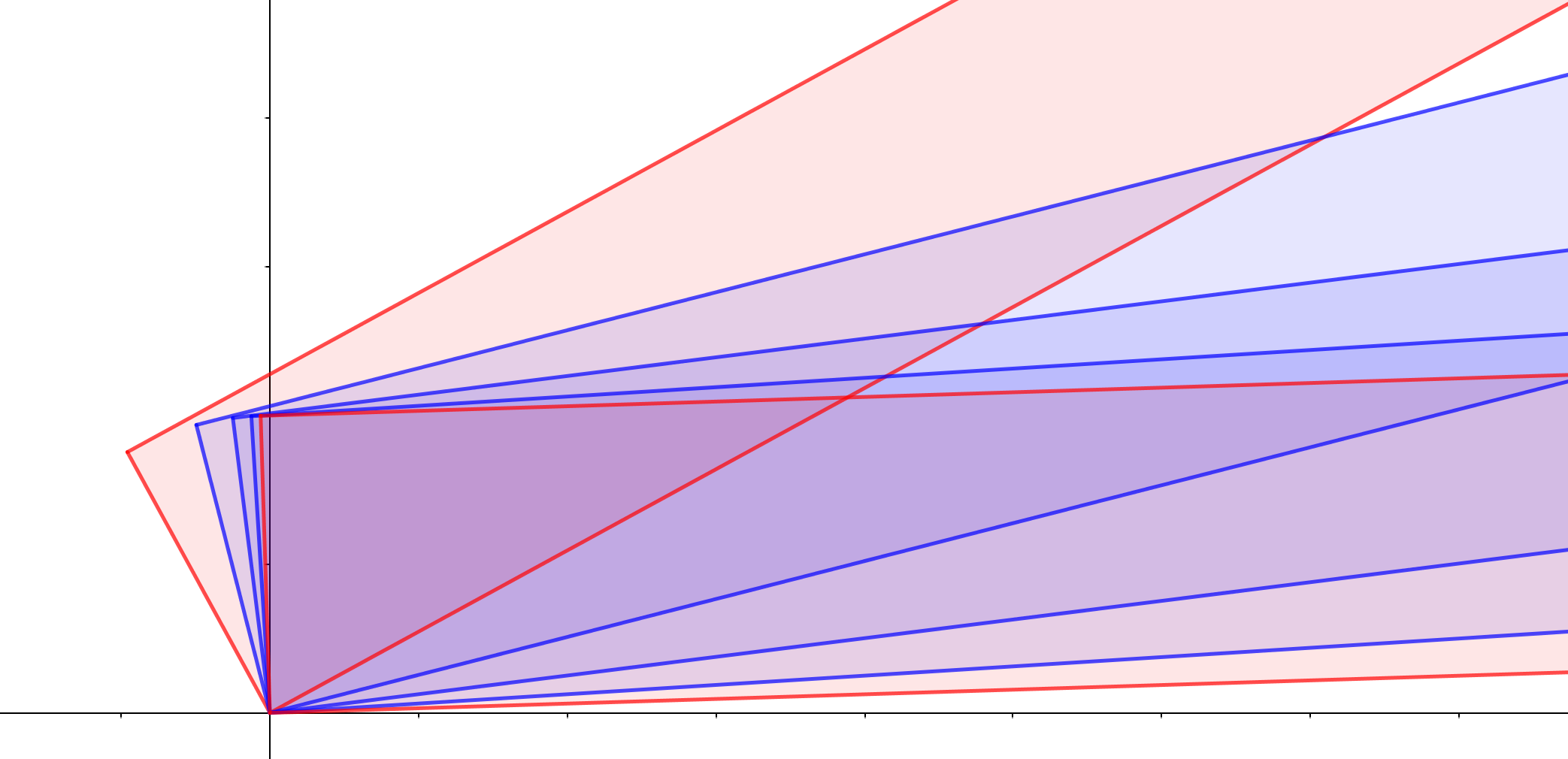}
\caption{Intersection of rectangles $r_{\alpha_{r_i}} Q_k$, $0\leq i\leq p$.}\label{fig.int}
\end{figure}
In order to prove (iv), start by writing $\alpha_r:= \theta_{i_r}$ in order to alleviate notations in the sequel. Given a finite sequence $r_0<\cdots< r_p=$ ($p\leq l$), observe first that one has (see Figure~\ref{fig.int})
$$
\bigcap_{i=0}^p r_{\alpha_{r_i}} Q_k=r_{\alpha_{r_0}} Q_k \cap r_{\alpha_{r_p}} Q_k,
$$
from which it follows that one has (see Figure~\ref{fig.h}):
$$
\left|\bigcap_{i=0}^p r_{\alpha_{r_i}} Q_k\right|=\frac 12 |OP| h\leq \frac 12 \frac{\ell_k}{\sin\hat{P}} \cdot \ell_k\geq \ell_k^2\frac{1}{\tan (\alpha_{r_0}-\alpha_{r_p})},
$$
since one has $\sin\hat{P}=\sin(\alpha_{r_0}-\alpha_{r_p})= \tan{(\alpha_{r_0}-\alpha_{r_p})}\cos{(\alpha_{r_0}-\alpha_{r_p})}\geq\frac 12 \tan(\alpha_{r_0}-\alpha_{r_p})$ (recall that $\alpha_{r_0}-\alpha_{r_p}\leq {\pi}/{4}\leq {\pi}/{3}$.
\begin{figure}[t]
\includegraphics[width=10cm]{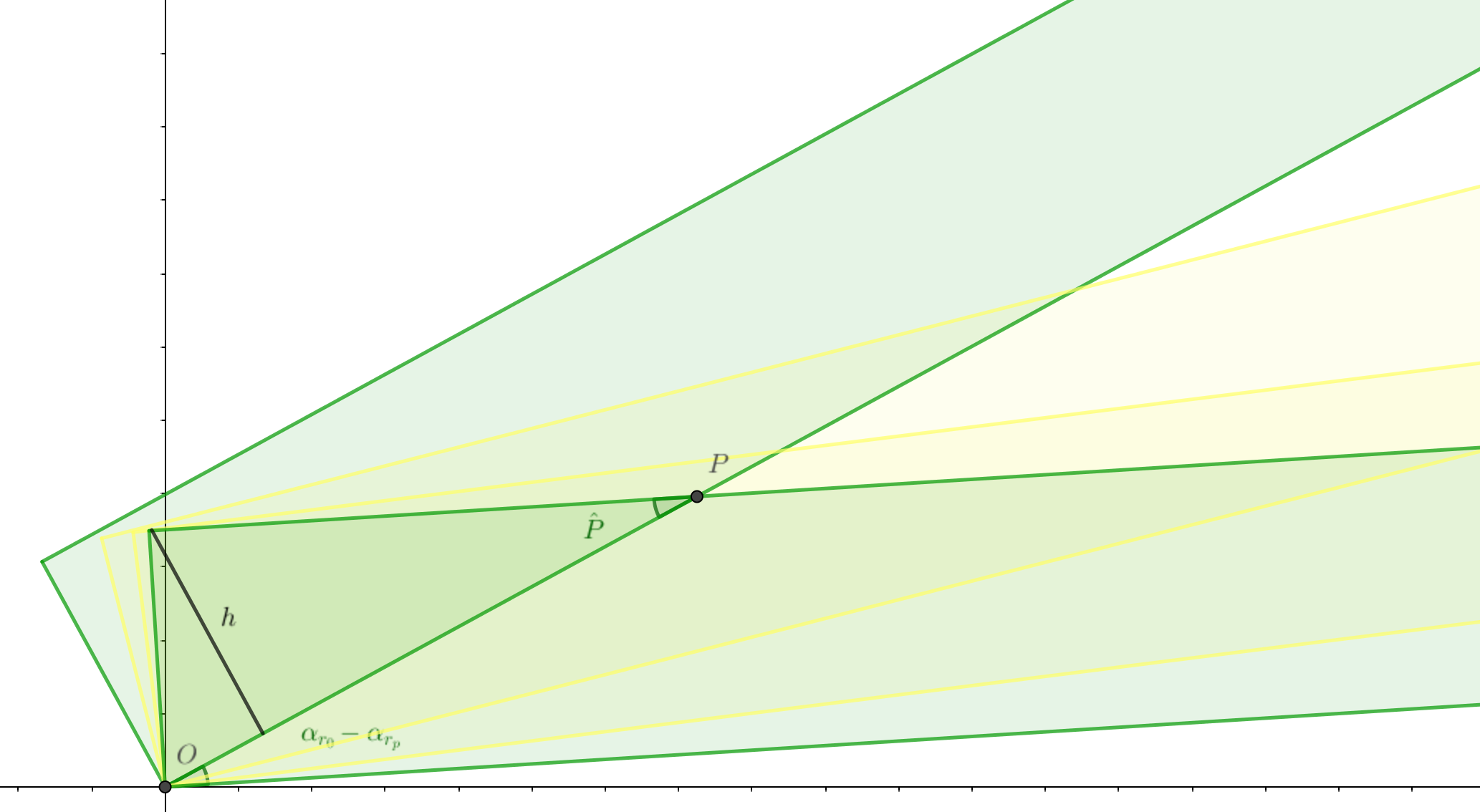}
\caption{Intersection of rectangles $r_{\alpha_{r_i}} Q_k$, $0\leq i\leq p$.}\label{fig.h}
\end{figure}

Now let $\chi:=\sum_{\theta\in\btheta'}\chi_{r_\theta Q_k}$ and fix $0\leq j\leq l$.
\begin{Claim}
One has $|\{\chi=j+1\}|\leq \frac 12 \ell_k^2 \sum_{s=0}^{l-j} \frac{1}{\tan(\alpha_{s}-\alpha_{s+j})}$.
\end{Claim}
\begin{proof}[Proof of the claim]
If $x\in\R^2$ satisfies $\chi(x)=j+1$, there exists $j+1$ angles $\alpha_{r_0},\dots, \alpha_{r_j}$ ($r_0<r_1<\cdots <r_j$) such that one has $$
x\in \bigcap_{i=0}^j r_{\alpha_{r_i}}Q_k=r_{\alpha_{r_0}}Q_k\cap r_{\alpha_{r_j}} Q_k=\bigcap_{r=r_0}^{r_j} r_{\alpha_r}Q_k.
$$
Hence if $r_0<r_1<\cdots <r_j$ is not a sequence of successive integers, we have in fact $\chi(x)\geq 1+r_j-r_0>j+1$.
This proves that one has:
$$
\{\chi=j+1\}\subseteq\bigcup_{s=0}^{l-j} \bigcap_{r=s}^{s+j} r_{\alpha_r}Q_k=\bigcup_{s=0}^{l-j} (r_{\alpha_s}Q_k\cap r_{\alpha_{s+j}}Q_k).
$$
Hence one computes:
$$
|\{\chi=j+1\}|\leq \ell_k^2 \sum_{s=0}^{l-j} \frac{1}{\tan(\alpha_{s}-\alpha_{s+j})},
$$
which proves the claim.
\end{proof}
Now reminding that one has $\tan(\alpha_{s}-\alpha_{s+j})=\tan(\theta_{i_s}-\theta_{i_{s+j}})\geq \frac C2 \zeta^{t_{i_{s+j}}}$ for all $0\leq s\leq l-j$, we compute:
$$
|\{\chi=j+1\}|\leq \frac{2}{C} \ell_k L_k \frac{\ell_k}{L_k} \sum_{s=0}^{l-j} \zeta^{-t_{i_{s+j}}}\leq 
\frac{2}{Cc(C)} |Q_k| \zeta^{t_{2k}} \sum_{r=j}^l \zeta^{-t_{i_r}}.
$$
Letting $e(C):=\frac{2}{Cc(C)}$, we finally obtain:
$$
\int_{\R^2} \varphi(\chi)=\sum_{j=0}^l \varphi(j+1) |\{\chi=j+1\}|\leq e(C)|Q_k|\zeta^{t_{2k}}\sum_{j=0}^l\varphi(j+1) \sum_{r=j}^l \zeta^{-t_{i_r}},
$$
which proves (iv) and hence finishes the proof of the lemma.
\end{proof}

The next proposition will be useful in order to study the maximal operator $M_{r_{\btheta}\calR}$.
\begin{Proposition}\label{B}
Assume that $\btheta$ is as in Lemma \ref{A}.
There exists a family $\calQ=\{Q_k:k\in\N\}$ of standard intervals in $\R^2$ which is totally ordered by inclusion, verifies $\inf\{\diam Q:Q\in\calQ\}=0$ and satisfies the following property: for any sufficiently large integer $k$, there exist sets $\btheta_k\subseteq\btheta$ and $\Theta_k\subseteq\R^2$ satisfying $\#\btheta_k=k+1$ as well as the following conditions (we define $\calR_k:=\{r_\theta Q_k: \theta\in\btheta_k\}$ and $Y_k:=\cup\calR_k$):
\begin{enumerate}
\item[(i)] $|Y_k|\geq \gamma(C) k  \zeta^{- t_{2k}} |\Theta_k|$;
\item[(ii)] for all $R\in\calR_k$, one has:
$$
\frac{|R\cap\Theta_k|}{|R|}\geq   {\gamma}{'}(C)  \zeta^{t_{2k}}\ ;
$$
\item[(iii)] all rectangles in $\calR_k$ have the same area;
\item[(iv)] for any subset $\btheta'=(\theta_{i_0},\dots,\theta_{i_l})\subseteq\btheta_k$ ($0\leq l\leq k$, $0\leq i_0<i_1<\cdots<i_l\leq k$) and any nonnegative, Borel function $\varphi:\R_+\to\R_+$ satisfying $\varphi(0)=0$, one has:
$$
\int_{\R^2} \varphi\left(\sum_{\theta\in\btheta'}\chi_{r_\theta Q_k}\right)\leq \gamma''(C) |Q_k| \zeta^{t_{2k}} \sum_{j=0}^{l}\varphi(j+1)\sum_{r=j}^l \zeta^{-t_{i_r}}.
$$
\end{enumerate}
where $\gamma(C) >0$, ${\gamma}{'}(C)> 0$ and ${\gamma}{''}(C)> 0$ are constants depending only on $C$. 
\end{Proposition}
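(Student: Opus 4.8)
The plan is to deduce Proposition~\ref{B} directly from Lemma~\ref{A}: the two ingredients that must be added are (a) a recursive choice of the free scale $\epsilon$ in Lemma~\ref{A} making the standard intervals nested and shrinking to a point, and (b) an explicit choice of the sets $\Theta_k$ as small squares sitting at the common corner of the rotated rectangles, from which the ``density'' estimates (i)--(ii) of Proposition~\ref{B} will drop out of properties (ii)--(iii) of Lemma~\ref{A}. Before doing anything I would observe that one may assume $(t_k)$ to be non-decreasing: replacing each $t_k$ by $\max_{i\leq k}t_i$ only makes $\zeta^{t_k}$ smaller (since $0<\zeta<1$) and hence preserves the hypothesis $m_j-m_k\geq C\zeta^{t_k}$ of Lemma~\ref{A}.

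Write $c=c(C)$, $d=d(C)$, $e=e(C)$ for the constants furnished by Lemma~\ref{A}; from the explicit values $c=4/C$ and $d=2\sqrt{1+4C^{-2}}$ used in its proof one has $c<d$, so $\kappa:=c/d\in(0,1)$. Then build $Q_k$ and $\btheta_k$ recursively: put $\epsilon_0:=1$ and, having defined $\epsilon_k$, apply Lemma~\ref{A} with this $\epsilon_k$ and this $k$ to obtain a standard interval $Q_k=[0,L_k]\times[0,\ell_k]$ and $\btheta_k=(\theta_0,\dots,\theta_k)$ satisfying (i)--(iv) there; then set $\epsilon_{k+1}:=\kappa L_k$. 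Also set $\calR_k:=\{r_\theta Q_k:\theta\in\btheta_k\}$, $Y_k:=\cup\calR_k$, and~---~this is the key choice~---~$\Theta_k:=[0,\ell_k]\times[0,\ell_k]$.

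Next I would verify the global properties of $\calQ:=\{Q_k:k\in\N\}$. Since $L_{k+1}\leq\epsilon_{k+1}=\kappa L_k$ with $\kappa<1$, we get $L_k\leq\kappa^k\to 0$ and $\diam Q_k\leq\sqrt2\,L_k\to 0$, so $\inf\{\diam Q:Q\in\calQ\}=0$. All the $Q_k$ share the corner at the origin, so $Q_{k+1}\subseteq Q_k$ follows from $L_{k+1}\leq L_k$ (clear) together with $\ell_{k+1}\leq\ell_k$; the latter comes from combining Lemma~\ref{A}(ii) at indices $k$ and $k+1$ with $t_{2k+2}\geq t_{2k}$:
$$
\ell_{k+1}\leq\frac{L_{k+1}}{c\,\zeta^{-t_{2k+2}}}\leq\frac{\kappa L_k\,\zeta^{t_{2k}}}{c}\leq\frac{\kappa d}{c}\,\ell_k=\ell_k .
$$
Hence $\calQ$ is totally ordered by inclusion.

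It then remains to check (i)--(iv) of Proposition~\ref{B}. Property (iii) is immediate since every element of $\calR_k$ is a rotation of $Q_k$, and (iv) is exactly Lemma~\ref{A}(iv) for the present $Q_k,\btheta_k$, so $\gamma''(C):=e(C)$ works. For (ii), fix $\theta\in\btheta_k\subseteq[0,\pi/4]$; since $\ell_k<L_k$ one has $\Theta_k\subseteq Q_k$, hence $r_\theta\Theta_k\subseteq r_\theta Q_k$, and since the square $\Theta_k$ contains the first quadrant quarter disc $\{x\in\R^2:x_1,x_2\geq0,\ |x|\leq\ell_k\}$ (if $|x|\leq\ell_k$ then $x_1,x_2\leq\ell_k$), the intersection $r_\theta Q_k\cap\Theta_k$ contains the circular sector of radius $\ell_k$ over the angular range $[\theta,\pi/2]$, whose area is $\tfrac12\ell_k^2(\tfrac\pi2-\theta)\geq\tfrac\pi8\ell_k^2$; dividing by $|r_\theta Q_k|=L_k\ell_k$ and invoking Lemma~\ref{A}(ii) in the form $\ell_k/L_k\geq\zeta^{t_{2k}}/d$ gives (ii) with $\gamma'(C):=\pi/(8d(C))$. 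Finally, Lemma~\ref{A}(iii) gives $|Y_k|\geq\tfrac k2 L_k\ell_k$; since $|\Theta_k|=\ell_k^2$ and $L_k/\ell_k\geq c\,\zeta^{-t_{2k}}$ by Lemma~\ref{A}(ii), this yields $|Y_k|\geq\tfrac c2\,k\,\zeta^{-t_{2k}}|\Theta_k|$, i.e.\ (i) with $\gamma(C):=c(C)/2$. I do not expect a serious obstacle: the statement is essentially a repackaging of Lemma~\ref{A}, and the only points requiring any thought are the recursive scaling (which must simultaneously produce a chain and force $\inf\diam=0$) and the choice $\Theta_k=[0,\ell_k]\times[0,\ell_k]$ with the elementary quarter disc estimate behind (ii); everything else is inherited verbatim.
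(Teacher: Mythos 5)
Your proof follows essentially the same strategy as the paper: build $\calQ$ recursively from Lemma~\ref{A} with shrinking scales, set $\Theta_k$ to be a small region anchored at the common corner of the rotated copies of $Q_k$, and read off (i)--(iv) from Lemma~\ref{A}. Two remarks on the details.

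First, the paper takes $\Theta_k := B(0,\ell_k)$ instead of the square $[0,\ell_k]^2$. The rotational invariance of the ball makes the computation for (ii) exact rather than a one-sided estimate: $|\Theta_k\cap r_\theta Q_k|=|\Theta_k\cap Q_k|=\tfrac{\pi}{4}\ell_k^2$ for every $\theta$, so $|R\cap\Theta_k|/|R|=\tfrac{\pi}{4}\ell_k/L_k$. Your sector estimate is correct but yields $\gamma'(C)=\pi/(8d(C))$ rather than $\pi/(4d(C))$; this is immaterial.

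Second, and more substantively, your ``without loss of generality, $(t_k)$ is non-decreasing'' step has a gap. It is true that replacing $t_k$ by $\tilde t_k:=\max_{i\le k}t_i$ preserves the hypothesis $m_j-m_k\ge C\zeta^{t_k}$ of Lemma~\ref{A}. But the conclusion of Proposition~\ref{B} explicitly refers to $t_{2k}$: in particular, (ii) claims $|R\cap\Theta_k|/|R|\ge\gamma'(C)\zeta^{t_{2k}}$, and if you run the construction with $\tilde t_k$ you only obtain $\ge\gamma'(C)\zeta^{\tilde t_{2k}}$, which is \emph{weaker} whenever $\tilde t_{2k}>t_{2k}$ since $0<\zeta<1$. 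The same issue affects the exponents $\zeta^{-t_{i_r}}$ in (iv). So the WLOG does not reduce the general case to the monotone case. The paper sidesteps this entirely by choosing $\epsilon=\min(\ell_k,1/k)$ in the recursion: then $L_{k+1}\le\ell_k<L_k$ directly, and Lemma~\ref{A}(i) gives $\ell_{k+1}<L_{k+1}/2\le\ell_k/2<\ell_k$, so $Q_{k+1}\subseteq Q_k$ and $\diam Q_k\to0$ with no monotonicity assumption on $(t_k)$. Replacing your $\epsilon_{k+1}=\kappa L_k$ by $\epsilon_{k+1}=\min(\ell_k,1/k)$ removes the need for the monotonization and repairs the argument; everything else in your write-up then goes through.
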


\begin{proof}
Define $\calR=\{Q_k:k\in\N^*\}$ where the sequence $(Q_k)_{k\in\N^*}$ is defined inductively as follows.
We choose $Q_1=[0,L_1]\times [0,\ell_1]$ and $\btheta_1\subseteq\btheta$ associated to $k=1$ and $\epsilon=1$ according to Lemma~\ref{A}. 
Assuming that $Q_1,\dots, Q_k$ have been constructed, for some integer $k\in\N^*$, we choose $Q_{k+1}=[0,L_{k+1}]\times [0,\ell_{k+1}]$ and $\btheta_{k+1}$ 
associated to $k+1$ and $\epsilon=\min(\ell_{k},1/k)$ according to Lemma~\ref{A}. Since the sequence $(Q_k)_{k\in\N^*}$ is a nonincreasing sequence of rectangles, 
it is clear that $\calQ$ is totally ordered by inclusion. It is also clear by construction that one has $\inf\{\diam Q_k:k\in\N\}=0$.
Now fix $k\in\N^*$ and define $\Theta_k:=B(0,\ell_k)$ and $\calR_k :=\{ r_\theta Q_k:\theta\in\btheta_k\}$. Compute hence, using [Lemma ~\ref{A}, (ii) and (iii)]:
$$
|Y_k|\geq \frac{k}{2} |Q_{k}| =  \frac{k}{2}  L_k l_k = \frac{1}{ 2 \pi} k \frac{L_k}{l_k} |\Theta_k|  \geq \frac{1}{2\pi} k c(C)  \zeta^{- t_{2k}} |\Theta_k|,
$$
so that (i) is proved with $\gamma(C) : = \frac{c(C)}{2 \pi}$.

For any $R\in\calR_k$, there exists $\theta\in\btheta_k$ such that one has $R=r_\theta Q_k$; we hence compute
$$
\frac{|R\cap\Theta_k|}{|R|}=\frac{|\Theta_k\cap r_\theta Q_k|}{|Q_k|}=\frac{\frac 14 \cdot \pi\ell_k^2}{L_k\ell_k}=\frac{\pi}{4} \cdot\frac{\ell_k}{L_k} 
\geq \frac{\pi}{4} \frac{1}{d(C)}  \zeta^{t_{2k}},
$$
which finishes the proof of (ii) if we set ${\gamma}^{'}(C) := \frac{\pi}{4} \frac{1}{d(C)}$. Now (iii) is clear, while (iv) results immediately from [Lemma~\ref{A}, (iv)].
\end{proof}
\begin{Remark}\label{rmq.prop3}
In the conditions of the previous proposition, statement (ii) can be reformulated as follows:
\begin{enumerate}
\item[(ii')] for all $x\in Y_k$, one has $M_{\calB_{\theta}(\calQ)} \chi_{\Theta_k}(x)\geq {\gamma}{'}(C)  \zeta^{t_{2k}}$.
\end{enumerate}
\end{Remark}

Using the previous proposition, we can, using standard techniques developed \emph{e.g.} in a previous work by the second and third authors \cite{MR2012} or in a paper by the current authors \cite{DANMOON},
obtain negative differentiation results in a range of Orlicz spaces for some differentiation bases of rectangles associated to various sets $\btheta$.

\section{Bad Orlicz spaces for some maximal functions}\label{sec.bad}

In the following statement, we let $\Phi_\beta(t):=t(1+\log_+^\beta t)$.
\begin{Proposition}\label{C}
Assume that $\btheta$ is as in Lemma \ref{A}, that the sequence $t_{k}$ tends to $+ \infty$ as $k \rightarrow \infty$ and the sequence 
$\{\frac{ {t_{k}}^{\beta}}{k}\}$ is  bounded above for some $\beta>0$, that is $M: = \limsup_{k}  \frac{ {t_{k}}^{\beta}}{k}<\infty$. There exists a (countable) family $\calQ$ of standard intervals in $\R^2$ with $\inf\{\diam Q:Q\in\calR\}=0$, satisfying the following conditions:
\begin{enumerate}
\item[(i)] $M_{\calB(\calQ)}$ has weak type $(1,1)$, and hence the differentiation basis $\calB(\calQ)$ differentiates $L^1(\R^2)$;
\item[(ii)] for any Orlicz function $\Psi$ satisfying $\Psi=o({\Phi}_{\beta})$  at $\infty$, $M_{\calB_{\btheta}(\calQ)}$ fails to be of weak type $(\Psi,\Psi)$, and hence the associated differentiation basis $\calB_{\btheta}$ fails to differentiate $L^\Psi(\R^2)$.
\end{enumerate}
\end{Proposition}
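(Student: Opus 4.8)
The plan is to build the family $\calQ$ by applying Proposition~\ref{B}, and then prove (i) via a direct geometric argument using the countably-many-shapes structure, and (ii) by exhibiting the sets $\Theta_k$ and $Y_k$ from Proposition~\ref{B} as witnesses to the failure of the weak-type $(\Psi,\Psi)$ inequality, in the style of \cite{MR2012,DANMOON}. Concretely, I take $\calQ=\{Q_k:k\in\N^*\}$ exactly the family furnished by Proposition~\ref{B}, together with the associated sets $\btheta_k$, $\calR_k=\{r_\theta Q_k:\theta\in\btheta_k\}$, $Y_k=\cup\calR_k$ and $\Theta_k=B(0,\ell_k)$.

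\textbf{Proof of (i).} The basis $\calB(\calQ)$ consists of genuine (axis-parallel) intervals whose shapes range over the countable set $\{L_k/\ell_k:k\in\N^*\}$. For a basis of intervals with countably many shapes, the weak $(1,1)$ bound is standard: by homothety invariance and a Besicovitch/Vitali-type selection one reduces to showing a covering lemma for finite subfamilies, and since each shape contributes a strong-type $(1,1)$ constant (indeed, for each fixed shape the basis is the translates of homothetic copies of a single rectangle, which differentiates $L^1$), one combines these over the countably many shapes using that the $Q_k$ are nested (totally ordered by inclusion) so that at each point the ``active'' rectangles through scales form a controllable tree. Alternatively, and more simply, I would invoke the classical fact (de Guzm\'an \cite[Ch.~III]{DEGUZMAN1975}) that a Busemann–Feller homothety-invariant basis whose elements have, at each point and each scale, uniformly bounded eccentricity-overlap differentiates $L^1$; the nestedness of $\calQ$ gives precisely this. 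Hence $M_{\calB(\calQ)}$ has weak type $(1,1)$ and $\calB(\calQ)$ differentiates $L^1(\R^2)$.

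\textbf{Proof of (ii).} Fix an Orlicz function $\Psi$ with $\Psi=o(\Phi_\beta)$ at $\infty$. I want to contradict the weak-type $(\Psi,\Psi)$ inequality for $M_{\calB_\btheta(\calQ)}$. Using Remark~\ref{rmq.prop3}, for all $x\in Y_k$ one has $M_{\calB_\btheta(\calQ)}\chi_{\Theta_k}(x)\ge\gamma'(C)\zeta^{t_{2k}}=:\alpha_k$, so
$$
|\{M_{\calB_\btheta(\calQ)}\chi_{\Theta_k}>\alpha_k/2\}|\ge|Y_k|\ge\gamma(C)\,k\,\zeta^{-t_{2k}}|\Theta_k|.
$$
If the weak-type inequality held with constant $c_\Psi$, then
$$
\gamma(C)\,k\,\zeta^{-t_{2k}}|\Theta_k|\le c_\Psi\int_{\R^2}\Psi\!\left(\frac{2\chi_{\Theta_k}}{\alpha_k}\right)=c_\Psi\,\Psi\!\left(\frac{2}{\gamma'(C)}\zeta^{-t_{2k}}\right)|\Theta_k|,
$$
hence $k\,\zeta^{-t_{2k}}\le c'\,\Psi(c''\zeta^{-t_{2k}})$. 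Writing $u_k:=c''\zeta^{-t_{2k}}$, which tends to $\infty$ since $t_k\to+\infty$, this reads $\Psi(u_k)\ge c'''\,k\,u_k$. Now I compare with $\Phi_\beta$: since $M=\limsup_k t_k^\beta/k<\infty$, there is a constant $A$ with $t_{2k}^\beta\le A(2k)$, i.e. $k\gtrsim t_{2k}^\beta\simeq(\log_\zeta^{-1}u_k)^\beta\simeq(\log u_k)^\beta$ up to constants, so $\Psi(u_k)\gtrsim u_k(\log u_k)^\beta\simeq\Phi_\beta(u_k)$ along the sequence $u_k\to\infty$. This contradicts $\Psi=o(\Phi_\beta)$, so $M_{\calB_\btheta(\calQ)}$ is not of weak type $(\Psi,\Psi)$, and by the equivalence quoted in Section~2 (de Guzm\'an \cite[Ch.~III]{DEGUZMAN1975}) the basis $\calB_\btheta(\calQ)$ fails to differentiate $L^\Psi(\R^2)$.

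\textbf{Main obstacle.} The delicate point is (i): making precise that the nested family of shapes does not accumulate ``bad'' eccentricities in a way that destroys the weak $(1,1)$ bound — i.e. one must genuinely use that $\calQ$ is totally ordered by inclusion (equivalently that the ratios $L_k/\ell_k$ grow, with $\ell_k\downarrow0$ fast) so that through any fixed point only ``few'' rectangles of the various shapes overlap substantially at any given scale. The cleanest route is to phrase this as: for each fixed shape the sub-basis is a translate of homothetic copies of a single rectangle (weak $(1,1)$ with a uniform constant after rescaling), and the nestedness lets one sum a geometric series in scales; but one should double-check that the eccentricities $L_k/\ell_k$ entering the per-shape constants are harmless because each shape is used as a \emph{single} rectangle up to translation and dilation, for which the weak $(1,1)$ constant is in fact an absolute constant (not depending on the eccentricity). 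Everything else is bookkeeping with the estimates already proved in Proposition~\ref{B}.
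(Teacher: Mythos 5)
Your part (ii) is essentially the paper's argument in a lightly reorganized form: you test the weak-type inequality directly against $\chi_{\Theta_k}$ at level $\alpha_k/2$ with $\alpha_k=\gamma'(C)\zeta^{t_{2k}}$, whereas the paper rescales to $f_k:=\gamma'(C)^{-1}\zeta^{-t_{2k}}\chi_{\Theta_k}$ and tests at level $1/2$; both routes lead to the comparison $\Psi(u_k)\gtrsim \Phi_\beta(u_k)$ along a sequence $u_k\to\infty$, via the key bound $k\gtrsim t_{2k}^\beta\simeq(\log u_k)^\beta$ coming from $M=\limsup_k t_k^\beta/k<\infty$, and this contradicts $\Psi=o(\Phi_\beta)$. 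So (ii) is fine.

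The gap is in part (i). Your opening claim that ``for a basis of intervals with countably many shapes, the weak $(1,1)$ bound is standard'' is not correct: a Busemann--Feller basis of axis-parallel rectangles whose shapes form a countable set need not satisfy a weak $(1,1)$ estimate at all (for instance, the basis generated by the shapes $\{n:n\in\N^*\}$ behaves essentially like the strong basis of all axis-parallel rectangles, which is not of weak type $(1,1)$). What the paper actually uses, and cites, is that $\calQ$ is totally ordered by inclusion: under this hypothesis $M_{\calB(\calQ)}$ is of weak type $(1,1)$ by \cite[Claim~1]{STOKOLOS2005}, a nontrivial covering lemma that genuinely exploits the nesting $Q_1\supset Q_2\supset\cdots$. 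Neither of the routes you sketch supplies this. Summing countably many weak-$(1,1)$ operators, each with a uniform constant, does not give a weak-$(1,1)$ operator, and your ``geometric series in scales'' is not made precise (indeed it is not clear what geometric decay you would sum, since the bad overlaps are between different shapes at comparable scales). The ``classical fact in de~Guzm\'an'' you invoke about ``uniformly bounded eccentricity-overlap'' does not exist in the form you want, and in any case the eccentricities $L_k/\ell_k$ here tend to infinity. (A minor additional slip: a fixed-shape sub-basis gives a uniform weak-$(1,1)$, not strong-$(1,1)$, constant.) To make (i) correct you should either cite the Stokolos covering lemma explicitly, as the paper does, or reproduce its proof using the total order by inclusion.
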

\begin{proof}
We keep the notations of Proposition~\ref{B} and call $\calQ$ the family of rectangles given by Proposition~\ref{B}. Observe first that, since $\calQ$ is 
totally ordered by inclusion, it follows \emph{e.g.} from \cite[Claim~1]{STOKOLOS2005} 
that $M_{\calB(\calQ)}$ satisfies a weak $(1,1)$ inequality.

In order to show (ii), define, for $k$ sufficiently large, $f_k:= \frac{1}{{\gamma}^{'}(C)} \cdot \zeta^{- t_{2k}} \chi_{\Theta_k}$, where $\Theta_k$ and $Y_k$ 
are associated to $k$ and $\calQ$ according to Proposition~\ref{B}.
\begin{Claim}\label{cl.Mphi} For each sufficiently large $k$, we have:
$$
|\{x\in\R^2:M_{\calB_{\btheta}(\calQ)} f_k(x)\geq 1\}| \geq {\gamma}_{1}(\beta, C, \zeta, M) \int_{\R^2} \Phi_{\beta}(f_k).
$$
\end{Claim}
\begin{proof}[Proof of the claim]
To prove this claim, one observes that for $x\in Y_k$ we have $M_{\calB_{\btheta}(\calQ)} f_k(x)\geq 1$ according to (ii') in Remark~\ref{rmq.prop3}. Yet, on the other hand, one computes, 
for $k$ sufficiently large:
\begin{eqnarray*}
\int_{\R^2} \Phi_{\beta}(f_k) & \leq & 
 \frac{1}{{\gamma}^{'}(C)} \cdot \zeta^{- t_{2k}} |\Theta_k| {\left( 2 t_{2k} \log \frac{1}{\zeta}\right)}^{\beta} \\
& \leq &  \frac{1}{\gamma(C) {\gamma}^{'}(C)}  |Y_k|  (2k)^{-1}  {\left( 2 t_{2k} \log \frac{1}{\zeta}\right)}^{\beta}\\
& < &  2^{\beta + 1} \tilde{M} \frac{1}{\gamma(C){\gamma}^{'}(C)}  |Y_k|   { \left(\log \frac{1}{\zeta}\right)}^{\beta}, 
\end{eqnarray*}
where $\tilde{M} : = \max \{1, M\}$ and $${\gamma}_{1}(\beta, C, \zeta, M) :=  
{ \left(\log \frac{1}{\zeta}\right)}^{- \beta} \left[ \frac{{{\gamma}^{'}(C)}{{\gamma}(C)}}{2^{\beta + 1} \tilde{M}}\right].$$
The claim follows.
\end{proof}
\begin{Claim}
For any $\Phi$ satisfying $\Phi=o(\Phi_{\beta})$ at $\infty$ and for each constant $ T >0$, we have:
$$
\lim_{k\to\infty}\frac{\int_{\R^2} \Phi_{\beta}(|f_k|)}{\int_{\R^2} \Phi( T |f_k|)}=\infty.
$$
\end{Claim}
\begin{proof}[Proof of the claim]
Compute for any $k$:
\begin{eqnarray*}
\frac{\int_{\R^2} \Phi(T|f_k|)}{\int_{\R^2} \Phi_{\beta}(|f_k|)} & = & \frac{\Phi(\zeta^{- t_{2k}}T/{\gamma}'(C))}{\Phi_{\beta}(\zeta^{- t_{2k}}/ {\gamma}^{'}(C))}\\
& = & \frac{\Phi( \zeta^{- t_{2k}}T/{\gamma}^{'}(C))}{\Phi_{\beta}(\zeta^{- t_{2k}} T/ {\gamma}^{'}(C))}
 \frac{\Phi_{\beta}( \zeta^{- t_{2k}} T/ {\gamma}^{'}(C))}{\Phi_{\beta}(\zeta^{- t_{2k}}/{\gamma}^{'}(C))},
\end{eqnarray*}
observe that the quotient $ \frac{\Phi_{\beta}( \zeta^{- t_{2k}}T/{\gamma}^{'}(C))}{\Phi_{\beta}(\zeta^{- t_{2k}}/{\gamma}^{'}(C))}$ is 
bounded as $k\to\infty$ by a constant independent of $k$, while by assumption the quotient 
$\frac{\Phi( \zeta^{- t_{2k}}T/{\gamma}^{'}(C))}{\Phi_{\beta}(\zeta^{- t_{2k}}T/{\gamma}^{'}(C))}$ tends to zero as $k\to\infty$. The claim is proved.
\end{proof}

We now finish the proof of Proposition~\ref{C}. To this purpose, fix $\Phi$ an Orlicz function satisfying $\Phi=o(\Phi_{\beta})$ at $\infty$ 
and assume that there exists a constant $T>0$ such that, for any $\alpha>0$, one has:
$$
|\{x\in\R^2:M_{\calB_{\btheta}(\calQ)} f(x)>\alpha\}|\leq\int_{\R^2} \Phi\left(\frac{T|f|}{\alpha}\right).
$$
Using Claim~\ref{cl.Mphi}, we would then get, for each $k$ sufficiently large:
$$
0< {\gamma}_{1}( \beta, C, \zeta, M) \int_{\R^2}\Phi_{\beta}(f_k)\leq \left|\left\{x\in\R^2:M_{\calB_{\btheta}(\calQ)} f_k(x)> \frac 12 \right\}\right|\leq \int_{\R^n} \Phi({2Tf_k}),
$$
contradicting the previous claim and proving the theorem.\end{proof}

The following result is proved in a very similar way~---~reason for which we here omit its straightforward proof.\begin{Proposition}\label{D}
Assume that $\btheta$ is as in Lemma \ref{A}, that the sequence $t_{k}$ tends to $+ \infty$ as $k \rightarrow \infty$, 
that it satisfies $M:= \limsup_{k} \frac{\log t_{k}}{k} < \infty$ and define $\Phi(t) = t ( 1 + \log_+ \log_+ t)$. 
There exists a (countable) family $\calQ$ of standard intervals in $\R^2$ with $\inf\{\diam Q:Q\in\calQ\}=0$, satisfying the following conditions:
\begin{enumerate}
\item[(i)] $M_{\calB(\calQ)}$ has weak type $(1,1)$, and hence the associated differentiation basis $\calB(\calQ)$ differentiates $L^1(\R^2)$;
\item[(ii)] for any Orlicz function $\Psi$ satisfying $\Psi=o({\Phi})$  at $\infty$, $M_{\calB_{\btheta}(\calQ)}$ fails to be of weak type $(\Psi,\Psi)$~---~hence the differentiation basis $\calB_{\btheta}(\calQ)$ fails to differentiate $L\log \log L(\R^2)$.
\end{enumerate}
\end{Proposition}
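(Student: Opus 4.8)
The plan is to repeat, \emph{mutatis mutandis}, the proof of Proposition~\ref{C}, replacing throughout the role of $\Phi_\beta(t)=t(1+\log_+^\beta t)$ by that of $\Phi(t)=t(1+\log_+\log_+ t)$ and, correspondingly, the boundedness of $\{t_k^\beta/k\}$ by that of $\{(\log t_k)/k\}$. Concretely, I would take the family $\calQ=\{Q_k:k\in\N\}$ produced by Proposition~\ref{B} (applicable since $\btheta$ is as in Lemma~\ref{A}), together with the associated data $\btheta_k\subseteq\btheta$, $\Theta_k=B(0,\ell_k)$, $\calR_k=\{r_\theta Q_k:\theta\in\btheta_k\}$ and $Y_k=\cup\calR_k$. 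Assertion~(i) is then established by the \emph{same} argument as in Proposition~\ref{C}: since $\calQ$ is totally ordered by inclusion, \cite[Claim~1]{STOKOLOS2005} gives that $M_{\calB(\calQ)}$ satisfies a weak $(1,1)$ inequality, whence $\calB(\calQ)$ differentiates $L^1(\R^2)$.

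For~(ii), I would fix $f_k:=\frac{1}{\gamma'(C)}\,\zeta^{-t_{2k}}\chi_{\Theta_k}$ for $k$ large and prove the analogue of Claim~\ref{cl.Mphi}, namely that $|\{M_{\calB_\btheta(\calQ)}f_k\ge 1\}|\ge\gamma_1\int_{\R^2}\Phi(f_k)$ for some $\gamma_1=\gamma_1(C,\zeta,M)>0$ and all $k$ large. The inclusion $Y_k\subseteq\{M_{\calB_\btheta(\calQ)}f_k\ge 1\}$ is exactly Remark~\ref{rmq.prop3}(ii'), as before. For the upper bound on $\int\Phi(f_k)$ one notes that $|f_k|=\zeta^{-t_{2k}}/\gamma'(C)$ on $\Theta_k$, so $\log_+|f_k|\le 2t_{2k}\log(1/\zeta)$ and hence $\log_+\log_+|f_k|\le 2\log t_{2k}$ for $k$ large (using $t_k\to\infty$); therefore $\Phi(f_k)\le 3\,|f_k|\log t_{2k}$ on $\Theta_k$ for $k$ large. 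Combining this with $\zeta^{-t_{2k}}|\Theta_k|\le\frac{1}{\gamma(C)k}|Y_k|$ (Proposition~\ref{B}(i)) and with $\frac{\log t_{2k}}{k}=2\cdot\frac{\log t_{2k}}{2k}\le 2(M+1)$ for $k$ large (since $M=\limsup_k(\log t_k)/k<\infty$) yields $\int_{\R^2}\Phi(f_k)\le\frac{6(M+1)}{\gamma(C)\gamma'(C)}|Y_k|$, i.e.\ the claim with $\gamma_1=\frac{\gamma(C)\gamma'(C)}{6(M+1)}$. This is the single place where the hypothesis on $\btheta$ is actually used, through the comparison $\log_+\log_+|f_k|\sim\log t_{2k}$.

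The second ingredient is, verbatim as in Proposition~\ref{C}, that for every Orlicz function $\Psi$ with $\Psi=o(\Phi)$ at $\infty$ and every $T>0$, the ratio $\int_{\R^2}\Phi(|f_k|)/\int_{\R^2}\Psi(T|f_k|)$ tends to $\infty$: writing $s_k:=\zeta^{-t_{2k}}/\gamma'(C)\to\infty$, this ratio equals $\Phi(s_k)/\Psi(Ts_k)=\frac{\Phi(s_k)}{\Phi(Ts_k)}\cdot\frac{\Phi(Ts_k)}{\Psi(Ts_k)}$, whose first factor is bounded (because $\Phi$ satisfies the $\Delta_2$ condition; indeed $\Phi(Ts)/\Phi(s)\to T$) and whose second factor tends to $\infty$ (because $\Psi=o(\Phi)$). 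One then concludes by contradiction exactly as at the end of Proposition~\ref{C}: were $M_{\calB_\btheta(\calQ)}$ of weak type $(\Psi,\Psi)$ with constant $T$, then testing this inequality against $f_k$ at the level $\alpha=\tfrac12$ and invoking the two claims would give $0<\gamma_1\int\Phi(f_k)\le|\{M_{\calB_\btheta(\calQ)}f_k>\tfrac12\}|\le\int\Psi(2Tf_k)$ for all $k$ large, contradicting the second claim; hence $M_{\calB_\btheta(\calQ)}$ is not of weak type $(\Psi,\Psi)$, and by the de~Guzm\'an equivalence \cite[Chapter~III]{DEGUZMAN1975} the basis $\calB_\btheta(\calQ)$ fails to differentiate $L^\Psi(\R^2)$. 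I do not expect a genuine obstacle here: the whole argument is a routine transcription of Proposition~\ref{C}, the one point demanding (elementary) care being precisely the asymptotics $\log_+\log_+(\zeta^{-t_{2k}}/\gamma'(C))\sim\log t_{2k}$ that converts the boundedness of $\{(\log t_k)/k\}$ into the control needed in the analogue of Claim~\ref{cl.Mphi}.
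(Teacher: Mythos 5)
Your proposal is correct and follows essentially the same route as the paper's (omitted, but stated to be "very similar" to Proposition~\ref{C}) proof: take $\calQ$ from Proposition~\ref{B}, set $f_k=\gamma'(C)^{-1}\zeta^{-t_{2k}}\chi_{\Theta_k}$, prove the analogue of Claim~\ref{cl.Mphi} using $\log_+\log_+|f_k|\lesssim\log t_{2k}$ together with the hypothesis $\limsup_k(\log t_k)/k<\infty$, and conclude by the same contradiction. Your exponent $\zeta^{-t_{2k}}$ is the one consistent with Proposition~\ref{B}(ii) and Remark~\ref{rmq.prop3}(ii'), and the rest of the bookkeeping matches.
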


We now turn to proving the three versions of Theorem~\ref{thm.main} we stated in the introduction.

\section{Three examples of sequences yielding Theorem~\ref{thm.main}}\label{sec.comput}
Let $d \in {\Bbb N}$ be fixed.  Assume that the sequence $(\theta_k)_{k\in\N}\subseteq (0,\pi/4]$ is such that one has:
\begin{equation}\label{eq.bilac0}
0<\lambda<\liminfe_{j\to\infty} \frac{\theta_{j+1}}{{\theta_j}^{d}}\leq \limsupe_{j\to\infty} \frac{\theta_{j+1}}{{\theta_j}^{d}}<\mu<1.
\end{equation} 

Letting $m_j:=\tan \theta_j$ for all $j\in\N$, one clearly has:
$$
\lim_{j\to\infty}\frac{m_j}{\theta_j}=1,
$$
so that (\ref{eq.bilac0}) also holds for the sequence $(m_j)_{j\in\N}$. There hence exists an index $j_0\in\N$ such that, for all $j\geq j_0$, one has 
$\lambda\leq\frac{m_{j+1}}{{m_j}^{d}}\leq \mu$ (we may also and will assume that one has $\frac{\lambda}{2} \leq m_{j_0}  \leq {\lambda}$). 
For the sake of clarity, we shall now consider that $j_0=0$ and compute, for an integer $0\leq j< k$:
$$
\tan(\theta_j-\theta_{k})=\frac{m_j-m_{k}}{1+m_jm_{k}}\geq\frac 12 (m_j-m_{k}).
$$
We also have, for every integer $0\leq j< k$:
\begin{equation}\label{eq.gen-d} {\lambda}^{\sum_{i=0}^{k-j-1} d^{i}} {m_j}^{d^{k-j}}\leq m_{k} \leq \mu^{\sum_{i=0}^{k-j-1} d^{i}} {m_j}^{d^{k-j}}. \end{equation}

\subsection{Assume first that $d=1$.} Arguing as in \cite{MOONENS2016}, we then get:
$$\tan(\theta_j-\theta_{k})  \geq   \frac 12 {{m}_0} \cdot   {\lambda}^{k} \cdot [\mu^{- 1} - 1],$$
so that the if we set $C=   \frac{1}{2} {{m}_0} \cdot   [\mu^{- 1} - 1]$, $\zeta = \lambda$, $t_{k} = k$ and take $\beta= 1$ then the hypotheses of Proposition~\ref{C} are satisfied and let $\calQ$, $\calR_k$ and $\btheta_k$ be associated to $\btheta$ by Proposition~\ref{B}; define $\Phi(t):=t(1+\log_+ t)$ and observe that it is easy to see (see \emph{e.g.} \cite[Chapter~1]{KR}) that one can have $\Psi(t)\leq K_1 e^t$
for the complementary function $\Psi$ to $\Phi$. There is no loss of generality, of course, to assume $\zeta<1/e$.

Now fix a subset $\btheta'=\{\theta_{i_r}:0\leq r\leq l\}\subset \btheta_k$ ($0\leq i_0<i_1<\cdots <i_l\leq k$) and write, using $\varphi=\Psi$ in [Proposition~\ref{B}, (iv)]:
$$
\int_{\R^2} \Psi\left(\sum_{\theta\in\btheta'}\chi_{r_\theta Q_k}\right)\leq K_1' |Q_k| \zeta^{2k} \sum_{j=0}^{l}e^j\sum_{r=j}^l \zeta^{-i_r},
$$
where $K_1'=eK_1e(C)>0$. Write then:
$$
\sum_{r=j}^l \zeta^{-i_r}\leq \zeta^{-i_l} \sum_{s=0}^\infty \zeta^s\leq \frac{\zeta^{-k}}{1-\zeta}.
$$
On the other hand one computes:
$$
\sum_{j=0}^l e^j=e^l\sum_{j=0}^l e^{j-l}\leq {e^k}\sum_{s=0}^\infty e^{-s}\leq \frac{e^{k+1}}{e-1}.
$$
Hence we obtain:
$$
\int_{\R^2} \Psi\left(\sum_{\theta\in\btheta'}\chi_{r_\theta Q_k}\right)\leq K_1' |Q_k| \zeta^{2k} \cdot \frac{e^{k+1}}{e-1}
\cdot \frac{\zeta^{-k}}{1-\zeta}\leq \frac{2K_1'}{1-\zeta} |Q_k|,
$$
since one has $e\zeta<1$ and $e/(e-1)\leq 2$.

Combining now Proposition~\ref{C} and Lemma~\ref{lem.A-Stok} for the particular sequence considered in this section, we get the following result.
\begin{Theorem}\label{thm.main1}
Assume that the sequence $(\theta_k)_{k\in\N}\subseteq (0,\pi/4]$ is such that one has:
$$
0<\lambda<\liminfe_{j\to\infty} \frac{\theta_{j+1}}{{\theta_j}}\leq \limsupe_{j\to\infty} \frac{\theta_{j+1}}{{\theta_j}}<\mu<1.
$$
Under those assumptions, there exists a countable family of standard intervals in $\R^2$ denoted by $\calQ$ and totally ordered by inclusion, satisfying the following properties (where one defines $\Phi(t):=t(1+\log_+t)$):
\begin{itemize}
\item[(i)] the associated differentiation basis $\calB(\calQ)$ differentiates $L^1(\R^2)$;
\item[(ii)] for any Orlicz function $\Psi$ satisfying $\Psi=o(\Phi)$ at $\infty$, the differentiation basis $\calB_{\btheta}(\calQ)$ fails to differentiate $L^\Psi(\R^2)$;
\item[(iii)] there exists a differentiation basis $\calB\subseteq\calB_{\btheta}(\calQ)$ that differentiates exactly $L\log L(\R^2)$.
\end{itemize}
\end{Theorem}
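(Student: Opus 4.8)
The plan is to assemble Theorem~\ref{thm.main1} from the three ingredients already at hand: Proposition~\ref{C}, Proposition~\ref{B}, and Stokolos' Lemma~\ref{lem.A-Stok}. First I would record that, under the bilacunarity hypothesis with $d=1$, the preceding computation in this section has already verified that the sequence $m_j=\tan\theta_j$ satisfies the hypotheses of Lemma~\ref{A} (hence of Propositions~\ref{B} and~\ref{C}) with the explicit choices $C=\tfrac12 m_0[\mu^{-1}-1]$, $\zeta=\lambda$, $t_k=k$, $\beta=1$; in particular $t_k\to\infty$ and $t_k^\beta/k\equiv 1$ is bounded, so $M=1<\infty$. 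Applying Proposition~\ref{C} therefore produces a countable family $\calQ$ of standard intervals, totally ordered by inclusion (by the construction in Proposition~\ref{B}) with $\inf\{\diam Q:Q\in\calQ\}=0$, such that $M_{\calB(\calQ)}$ has weak type $(1,1)$ — giving (i) directly — and such that $M_{\calB_{\btheta}(\calQ)}$ fails weak type $(\Psi,\Psi)$ for every Orlicz $\Psi=o(\Phi)$ at $\infty$ with $\Phi(t)=t(1+\log_+t)$ — giving (ii) directly, via the standard equivalence between weak-type bounds and differentiation recalled in the Orlicz-spaces subsection.

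The substantive remaining work is (iii): producing a sub-basis $\calB\subseteq\calB_{\btheta}(\calQ)$ that differentiates exactly $L\log L(\R^2)$. For this I would verify the four hypotheses of Lemma~\ref{lem.A-Stok} for the collection $\calR=\bigcup_k\calR_k$, where $\calR_k=\{r_\theta Q_k:\theta\in\btheta_k\}$ comes from Proposition~\ref{B}, with $\Phi(t)=t(1+\log_+t)$ (which satisfies $\Delta_2$), the complementary function $\Psi(t)\le K_1 e^t$, the balls $B_k:=\Theta_k=B(0,\ell_k)$, and the scalars $\lambda_k:= \gamma'(C)^{-1}\zeta^{-t_{2k}}$, which increase to $\infty$ since $t_{2k}\to\infty$. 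Hypothesis (i) of Lemma~\ref{lem.A-Stok} (all members of $\calR_k$ have equal area) is Proposition~\ref{B}(iii). Hypothesis (iii) (that $|R\cap B_k|/|R|\ge c_2/\lambda_k$) is exactly Proposition~\ref{B}(ii) with $c_2=\gamma'(C)$. Hypothesis (iv) ($|\cup\calR_k|\ge c_3\Phi(\lambda_k)|\Theta_k|$) follows by combining Proposition~\ref{B}(i), which gives $|Y_k|\ge\gamma(C)k\zeta^{-t_{2k}}|\Theta_k|$, with the observation that $\Phi(\lambda_k)\le \lambda_k(1+\log_+\lambda_k)\le \text{const}\cdot\zeta^{-t_{2k}}\cdot t_{2k}\le\text{const}\cdot k\zeta^{-t_{2k}}$ because $t_{2k}\le 2k$; so $|Y_k|\gtrsim\Phi(\lambda_k)|\Theta_k|$ with an absolute constant. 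Hypothesis (ii), the sublinearity-type bound $\int_{\R^2}\Psi(\sum_{R\in\calS}\chi_R)\le c_1\sum_{R\in\calS}|R|$ for all finite $\calS\subseteq\calR_k$, is precisely what the displayed calculation just before the theorem establishes: using $\varphi=\Psi\le K_1 e^t$ in Proposition~\ref{B}(iv) and summing the two resulting geometric series (which converge because $e\zeta<1$ after reducing to $\zeta<1/e$) yields $\int\Psi(\sum_{\theta\in\btheta'}\chi_{r_\theta Q_k})\le \tfrac{2K_1'}{1-\zeta}|Q_k|$, and since all rectangles in $\calR_k$ have area $|Q_k|$, this is exactly hypothesis (ii) with $c_1=2K_1'/(1-\zeta)$.

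With all four hypotheses of Lemma~\ref{lem.A-Stok} checked, the lemma hands us a differentiation basis $\calB\subseteq\calB(\calR)=\calB_{\btheta}(\calQ)$ differentiating precisely $L^\Phi(\R^2)=L\log L(\R^2)$, which is (iii). I expect the main obstacle to be purely bookkeeping: matching the notational conventions of Stokolos' lemma (the index $n$ versus $k$, the set called $E_k$ versus $\Theta_k$, the constants) to the objects produced by Proposition~\ref{B}, and making sure the geometric-series estimate in hypothesis (ii) is invoked uniformly in $k$ rather than for a single $k$. There is no genuinely hard analytic step left — all the real work has been front-loaded into Lemma~\ref{A}, Proposition~\ref{B}, Proposition~\ref{C} and the pre-theorem computation — so the proof is essentially an orchestration: state the parameter choices, cite (i) and (ii) from Proposition~\ref{C} for parts (i) and (ii) of the theorem, then verify the four bullets of Lemma~\ref{lem.A-Stok} and invoke it for part (iii).
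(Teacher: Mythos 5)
Your proposal is correct and follows essentially the same strategy as the paper: it applies Proposition~\ref{C} for parts (i)--(ii) and verifies the four hypotheses of Lemma~\ref{lem.A-Stok} via Proposition~\ref{B} together with the geometric-series estimate displayed just before the theorem, yielding part (iii). (One tiny constant slip: with your choice $\lambda_k=\gamma'(C)^{-1}\zeta^{-t_{2k}}$, Proposition~\ref{B}(ii) matches hypothesis (iii) of the Stokolos lemma with $c_2=1$, not $c_2=\gamma'(C)$; alternatively take $\lambda_k=\zeta^{-t_{2k}}$ and keep $c_2=\gamma'(C)$ --- the argument is unaffected either way.)
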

\begin{Remark}
Statement (iii) above is weaker than stating that the basis $\calB_{\btheta}(\calQ)$ itself does differentiate (exactly) $L\log L(\R^2)$; however, it is not known to us whether this statement holds or not.
\end{Remark}

\subsection{Assume now that $d >1$ in (\ref{eq.bilac0}).} Using (\ref{eq.gen-d}), we obtain for $0\leq j<k$:
$$
{\lambda}^{\frac{d^{k -j} -1}{d-1}}  {m_j}^{d^{k-j}} = {\lambda}^{\sum_{i=0}^{k-j-1} d^{i}} {m_j}^{d^{k-j}}\leq m_{k} \leq \mu^{\sum_{i=0}^{k-j-1} d^{i}} {m_j}^{d^{k-j}} = 
{\mu}^{\frac{d^{k -j}-1}{d-1}}  {m_j}^{d^{k-j}}.$$
We then compute, for the same $j<k$:
\begin{eqnarray*}
\tan(\theta_j-\theta_{k}) & \geq & \frac 12 (m_j-m_{k})\geq \frac 12 [{{m}_k}^{\frac{1}{d^{k-j}}} \cdot \mu^{- \frac{\sum_{i=0}^{k-j-1} d^{i}}{d^{k- j}}} - {m}_{k}]\\
&  \geq &  \frac 12 {{m}_k} \cdot [{\mu}^{- \frac{\sum_{i=0}^{k-j-1} d^{i}}{d^{k- j}}}  - 1]\\
& \geq &  \frac 12 {{m}_0}^{d^{k}} \cdot   {\lambda}^{\sum_{i=0}^{k-1} d^{i}} \cdot [\mu^{- \frac{1}{d-1}} - 1]\\
& = &  \frac 12 {{m}_0}^{d^{k}} \cdot   {\lambda}^{d^{k}} \cdot [\mu^{-   \frac{1}{d-1}} - 1]\\
& \geq & \frac 12 {{m}_0}^{d^{k}} \cdot   {\lambda}^{d^{k}} \cdot [\mu^{-  \frac{1}{d-1}} - 1] \\
& \geq &  \frac 12 {(\frac{\lambda}{2})}^{d^{k}} \cdot   {\lambda}^{d^{k}} \cdot [\mu^{-   \frac{1}{d-1}} - 1] \\
& \geq & \frac 12 {(\frac{\lambda}{2})}^{2d^{k}} \cdot [\mu^{-  \frac{1}{d-1}} - 1] .\\
\end{eqnarray*}
So if we set $C=  \frac 12 [\mu^{-   \frac{1}{d-1}} - 1]$, $\zeta = {(\frac{\lambda}{2})}^{2}$ and $t_{k} = d^{k}$,  then the hypotheses of Proposition~\ref{D} are satisfied. Let hence $\calQ$, $\calR_k$ and $\btheta_k$ be associated to $\btheta$ by Proposition~\ref{B} and define $\Phi(t):=t(1+\log_+\log_+ t)$ and observe that it is easy to see (see \emph{e.g.} \cite[Chapter~1]{KR}) that one can have $\Psi(t)\leq K_2 \exp(\exp t)$
for the complementary function $\Psi$ to $\Phi$. There is no loss of generality, again, to assume $\zeta<1/e$.

Fix as before a subset $\btheta'=\{\theta_{i_r}:0\leq r\leq l\}\subset \btheta_k$ ($0\leq i_0<i_1<\cdots <i_l\leq k$) and write, using $\varphi=\Psi$ in [Proposition~\ref{B}, (iv)]:
$$
\int_{\R^2} \Psi\left(\sum_{\theta\in\btheta'}\chi_{r_\theta Q_k}\right)\leq K_2' |Q_k| \zeta^{d^{2k}} \sum_{j=0}^{l}e^{e^{j+1}}\sum_{r=j}^l \zeta^{-d^{i_r}},
$$
where $K_2'=K_2e(C)>0$. Write then:
$$
\sum_{r=j}^l \zeta^{-d^{i_r}}\leq \zeta^{-d^{i_l}} \sum_{s=0}^\infty \zeta^s\leq \frac{\zeta^{-d^k}}{1-\zeta}.
$$
On the other hand one computes:
$$
\sum_{j=0}^l e^{e^{j+1}}\leq e^{e^{l+1}}\sum_{s=0}^\infty e^{-s}\leq \frac{e^{1+e^{k+1}}}{e-1}.
$$
Hence we obtain:
$$
\int_{\R^2} \Psi\left(\sum_{\theta\in\btheta'}\chi_{r_\theta Q_k}\right)\leq K_2'e |Q_k| \zeta^{d^{2k}} \cdot \frac{e^{e^{k+1}}}{e-1}
\cdot \frac{\zeta^{-d^k}}{1-\zeta}\leq \frac{K_2'e}{(1-\zeta)(e-1)} |Q_k| \exp\left[e^{k+1}+d^k-\left(d^2\right)^k\right],
$$
since one has $e\zeta<1$. Observing \emph{e.g.} that $d^2\geq 4>e$ and that one hence has: $$
e^{k+1}+d^k-\left(d^2\right)^k=e\left[e^k-\frac{1}{2e} \left(d^2\right)^k\right]+ d^k-\frac 12 \left(d^2\right)^k,
$$
and since moreover it is clear that $a^k-\epsilon b^k$ tends to $-\infty$ as $k\to\infty$ for any real numbers $1<a<b$ and $\epsilon>0$, we finally get:
$$
\lim_{k\to\infty} \exp\left[e^{k+1}+d^k-\left(d^2\right)^k\right] = 0.
$$
There thus exists a constant $K_2''>0$ (depending only on $\btheta$ and $d$) for which one has:
$$
\int_{\R^2} \Psi\left(\sum_{\theta\in\btheta'}\chi_{r_\theta Q_k}\right)\leq K_2'' |Q_k|.
$$

Combining now as above Proposition~\ref{D} and Lemma~\ref{lem.A-Stok} for the particular case of a sequence satisfying (\ref{eq.bilac0}) for $d>1$, we obtain the following result.
\begin{Theorem}\label{thm.main2}
Assume that $d\in\N^*$ and the sequence $(\theta_k)_{k\in\N}\subseteq (0,\pi/4]$ are such that one has:
$$
0<\lambda<\liminfe_{j\to\infty} \frac{\theta_{j+1}}{{\theta_j^d}}\leq \limsupe_{j\to\infty} \frac{\theta_{j+1}}{{\theta_j^d}}<\mu<1.
$$
Under those assumptions, there exists a countable family of standard intervals in $\R^2$ denoted by $\calQ$ and totally ordered by inclusion, satisfying the following properties (where one defines $\Phi(t):=t(1+\log_+\log_+t)$):
\begin{itemize}
\item[(i)] the associated differentiation basis $\calB(\calQ)$ differentiates $L^1(\R^2)$;
\item[(ii)] for any Orlicz function $\Psi$ satisfying $\Psi=o(\Phi)$ at $\infty$, the differentiation basis $\calB_{\btheta}(\calQ)$ fails to differentiate $L^\Psi(\R^2)$;
\item[(iii)] there exists a differentiation basis $\calB\subseteq\calB_{\btheta}(\calQ)$ that differentiates exactly $L\log \log L(\R^2)$.
\end{itemize}
\end{Theorem}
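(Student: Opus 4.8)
The plan is to obtain Theorem~\ref{thm.main2} by combining Proposition~\ref{D}, which already yields parts (i) and (ii), with Stokolos' Lemma~\ref{lem.A-Stok}, which will produce part (iii). First I would observe that, with the parameters $C=\frac12[\mu^{-1/(d-1)}-1]$, $\zeta=(\lambda/2)^2$ and $t_k=d^k$ fixed just above, the hypotheses of Proposition~\ref{D} hold, since $t_k=d^k\to\infty$ and $\limsup_k(\log t_k)/k=\log d<\infty$. Applying it gives a countable family $\calQ=\{Q_k:k\in\N\}$ of standard intervals, totally ordered by inclusion and with $\inf\{\diam Q:Q\in\calQ\}=0$; parts (i) and (ii) of the theorem are then literally parts (i) and (ii) of Proposition~\ref{D}, so only part (iii) needs work.

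For (iii), I would apply Lemma~\ref{lem.A-Stok} to the Orlicz function $\Phi(t)=t(1+\log_+\log_+t)$, which plainly satisfies the $\Delta_2$ condition, and to the family $\calR=\bigcup_k\calR_k$ where $\calR_k=\{r_\theta Q_k:\theta\in\btheta_k\}$ and $\btheta_k$, $\Theta_k$ are the data attached to $\calQ$ by Proposition~\ref{B}; I would take the balls $B_k:=\Theta_k$ and the scaling sequence $\lambda_k:=\zeta^{-t_{2k}}/\gamma'(C)$, which increases to $\infty$ since $t_{2k}=d^{2k}\to\infty$. Then I would match the four hypotheses of the lemma against Proposition~\ref{B}: hypothesis~(i) is Proposition~\ref{B}(iii); hypothesis~(iii) is Proposition~\ref{B}(ii), since $|R\cap\Theta_k|/|R|\geq\gamma'(C)\zeta^{t_{2k}}=\lambda_k^{-1}$ for $R\in\calR_k$; and hypothesis~(ii) is the estimate $\int_{\R^2}\Psi(\sum_{\theta\in\btheta'}\chi_{r_\theta Q_k})\leq K_2''|Q_k|$ proved just above (with $\Psi$ the complementary function to $\Phi$), since every finite subcollection $\calS$ of $\calR_k$ has the form $\{r_\theta Q_k:\theta\in\btheta'\}$ and $\sum_{R\in\calS}|R|\geq|Q_k|$.

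The only step carrying a real (though brief) computation~---~and the place where the calibration $t_k=d^k$ against the double logarithm in $\Phi$ is essential~---~is hypothesis~(iv), that $|\cup\calR_k|\geq c_3\,\Phi(\lambda_k)\,|\Theta_k|$. From Proposition~\ref{B}(i), $|\cup\calR_k|=|Y_k|\geq\gamma(C)\,k\,\zeta^{-t_{2k}}|\Theta_k|$, so it suffices to check $\Phi(\lambda_k)\leq C'\,k\,\zeta^{-t_{2k}}$ for large $k$; and indeed $\log_+\log_+\lambda_k\sim\log\!\bigl(d^{2k}\log(1/\zeta)\bigr)\sim 2k\log d$, whence $\Phi(\lambda_k)=\lambda_k(1+\log_+\log_+\lambda_k)\asymp k\,\zeta^{-t_{2k}}$. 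Lemma~\ref{lem.A-Stok} then furnishes a differentiation basis $\calB\subseteq\calB(\calR)\subseteq\calB_{\btheta}(\calQ)$ that differentiates precisely $L^\Phi(\R^2)=L\log\log L(\R^2)$, i.e.\ differentiates exactly $L\log\log L(\R^2)$, which is part (iii). The main obstacle is bookkeeping rather than analysis: correctly identifying which quantities from Proposition~\ref{B} play the roles of $B_k$, $\lambda_k$, $\Phi$ and $\Psi$ in Stokolos' lemma, and checking that the resulting basis genuinely lies inside $\calB_{\btheta}(\calQ)$.
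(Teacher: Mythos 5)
Your proposal is correct and follows essentially the same route as the paper: the same choice of parameters $C$, $\zeta=(\lambda/2)^2$, $t_k=d^k$, the same invocation of Proposition~\ref{D} for (i)--(ii), and the same application of Lemma~\ref{lem.A-Stok} (with the $\Delta_2$ bound on $\Psi\asymp\exp(\exp t)$ feeding into Proposition~\ref{B}(iv)) to obtain (iii). You merely spell out the bookkeeping of which objects from Proposition~\ref{B} play the roles of $B_k$ and $\lambda_k$ in Stokolos' lemma and verify $\Phi(\lambda_k)\asymp k\zeta^{-t_{2k}}$ explicitly, which the paper leaves implicit after establishing the estimate $\int_{\R^2}\Psi(\sum_{\theta\in\btheta'}\chi_{r_\theta Q_k})\leq K_2''|Q_k|$.
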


\subsection{A non-lacunary example}
Let $(a_{j})$ be a nonincreasing sequence of positive real numbers with $0< a = \inf_{j} a_{j} \leq \sup_{j} a_{j}= b <1$. Clearly, $a_{0} = \max a_{j} =b$.  
In the sequel we fix a real number $0<d<1$ and we define a sequence $(\theta_j)$ by:
$$
\theta_j:=\arctan \left[{(a_{j})}^{j^d}\right].
$$
Observe that letting $m_j:=\tan \theta_j$ for all $j$, we can write, for $j_0\leq j\leq k$:
\begin{eqnarray}
m_j-m_k & = &  {(a_{j})}^{j^d} - {(a_{k})}^{k^d}  \geq  {(a_{k})}^{j^d} - {(a_{k})}^{k^d}  \nonumber\\
& = &    {(a_{k})}^{k^d}\left[ {(a_{k})}^{j^d - k^{d}} - 1\right]\nonumber\\
&  \geq &  {a}^{k^d}\left[ {\left(\frac{1}{b}\right)}^{k^d - j^{d}} - 1\right]  \nonumber\\
& \geq &  {a}^{k^d}\left[ {\left(\frac{1}{b}\right)}^{k^d - ({k-1})^{d}} - 1\right].\label{eq.aaa}
\end{eqnarray}
It hence follows in particular that $(m_j)$ (and hence also $(\theta_j)$) is a decreasing sequence.

On the other hand, it is easy to observe that one has, for all $j$:
$$
j^d-(j-1)^d=dj^{d-1}+O(j^{d-2}),
$$
we get:
$$
\left[j^{d}-(j-1)^d\right] j^{1-d}=d+O(j^{-1}),
$$
so that, for $j$ sufficiently large, we have:
\begin{equation}\label{eq.jd}
j^d-(j-1)^d\geq \frac {d}{2 j^{1-d}}.
\end{equation}
It is also the case that for $j$ sufficiently large, we always have:
\begin{equation}\label{eq.jexp}
\frac 2d j^{1-d} \leq \frac 12 b^{-j^d}\ ;
\end{equation}
we shall hence assume that both (\ref{eq.jd}) and (\ref{eq.jexp}) hold for $j\geq j_0$, and we shall, from now on, work with the sequence $(\theta_j)_{j\geq j_0}$; we also define $\btheta:=\{\theta_j:j\geq j_0\}$. 

Now given $k>j_0$, we obtain from (\ref{eq.aaa}), for all $\max \{ j_0 ,  \log_a (\log_a e)\} \leq  j_{1} \leq j \leq k$ , using the inequality $a^x\geq 1+x$ 
for all $x \geq \log_a (\log_a e)$ and then (\ref{eq.jd}) and (\ref{eq.jexp}):
$$
m_j-m_k  \geq {a}^{k^d}\left[ {\left(\frac{1}{b}\right)}^{k^d - ({k-1})^{d}} - 1\right]  \geq   {a}^{k^d}  \frac{d}{2 k^{1-d}} \geq 2 {(a b )}^{k^d}.
$$
So if we take $C= 2$,  $\zeta = ab$, $t_{k} = k^{d}$ and $\beta = \frac 1d$, we have that the hypotheses of Proposition~\ref{C} are satisfied. Let also $\calQ$, $\calR_k$ and $\btheta_k$ be associated to $\btheta$ by Proposition~\ref{B}; define now $\Phi(t):=t(1+\log_+^{1/d} t)$ and observe that it is easy to see (see \emph{e.g.} \cite[Chapter~1]{KR}) that one can have $\Psi(t)\leq K_3 e^{t^d}$
for the complementary function $\Psi$ to $\Phi$. There is no loss of generality, reducing $\zeta$ if necessary, to assume that one has $\eta:=e\zeta^{2^d-1}<1$.

Now fix a subset $\btheta'=\{\theta_{i_r}:0\leq r\leq l\}\subset \btheta_k$ ($0\leq i_0<i_1<\cdots <i_l\leq k$) and write, using $\varphi=\Psi$ in [Proposition~\ref{B}, (iv)]:
$$
\int_{\R^2} \Psi\left(\sum_{\theta\in\btheta'}\chi_{r_\theta Q_k}\right)\leq K_3' |Q_k| \zeta^{(2k)^d} \sum_{j=0}^{l}e^{(j+1)^d}\sum_{r=j}^l \zeta^{-i_r^d},
$$
where $K_3'=K_3e(C)>0$. Write then:
$$
\sum_{r=j}^l \zeta^{-i_r^d}\leq \zeta^{-i_l^d} \sum_{r=j}^l \zeta^{i_l^d-i_r^d}\leq (k+1)\zeta^{-k^d},
$$
since we have $\zeta^{i_l^d-i_r^d}\leq 1$ for all $j\leq r\leq l$.
On the other hand one computes in a similar fashion:
$$
\sum_{j=0}^l e^{j^d}=e^{l^d}\sum_{j=0}^l e^{j^d-l^d}\leq (k+1){e^{k^d}}.
$$
Hence we obtain:
$$
\int_{\R^2} \Psi\left(\sum_{\theta\in\btheta'}\chi_{r_\theta Q_k}\right)\leq K_3' |Q_k| (k+1)^2\zeta^{(2k)^d-k^d} e^{k^d}.
$$
Yet one has:
$$
(k+1)^2\zeta^{(2k)^d-k^d} e^{k^d}=(k+1)^2 \left(e\zeta^{2^d-1}\right)^{k^d}=(k+1)^2 \eta^{k^d}\to 0,\quad k\to\infty.
$$
Hence there exists a constant $K_3''>0$, independent of $k$, for which one has:
$$
\int_{\R^2} \Psi\left(\sum_{\theta\in\btheta'}\chi_{r_\theta Q_k}\right)\leq K_3'' |Q_k|.
$$

Combining now Proposition~\ref{C} and Lemma~\ref{lem.A-Stok} for the particular sequence considered in this section, we get the following result.
\begin{Theorem}\label{thm.main3}
Assume that $0<d<1$ is a real number.
Let $(a_{j})$ be a nonincreasing sequence of positive real numbers with $0<  \inf_{j} a_{j} \leq \sup_{j} a_{j} <1$ and define a sequence $(\theta_j)$ by:
$$
\theta_j:=\arctan \left[{(a_{j})}^{j^d}\right].
$$
Under those assumptions, there exists a countable family of standard intervals in $\R^2$ denoted by $\calQ$ and totally ordered by inclusion, satisfying the following properties (we define $\Phi(t):=t(1+\log_+^{1/d} t)$ for $\beta>0$):
\begin{itemize}
\item[(i)] the associated differentiation basis $\calB(\calR)$ differentiates $L^1(\R^2)$;
\item[(ii)] for any Orlicz function $\Psi$ satisfying $\Psi=o(\Phi)$ at $\infty$, the differentiation basis $\calB_{\btheta}(\calQ)$ fails to differentiate $L^\Psi(\R^2)$;
\item[(iii)] there exists a differentiation basis $\calB\subseteq\calB_{\btheta}(\calQ)$ that differentiates exactly $L\log^{\frac 1d} L(\R^2)$~---~hence it also differentiates $L^p(\R^2)$ for all $p>1$.
\end{itemize}
\end{Theorem}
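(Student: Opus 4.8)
The plan is to read parts (i) and (ii) off Proposition~\ref{C} and to obtain part (iii) by feeding the family produced by Proposition~\ref{B} into Stokolos' Lemma~\ref{lem.A-Stok}. First I would note that the computations carried out in the present subsection have verified exactly the hypotheses of Proposition~\ref{C} for the sequence at hand: with $C=2$, $\zeta=ab$, $t_k=k^d$ and $\beta=1/d$ one has $t_k\to\infty$ and $t_k^{\beta}/k=(k^d)^{1/d}/k=1$, so that $M=\limsup_k t_k^{\beta}/k=1<\infty$. Since $\Phi_\beta(t)=t(1+\log_+^{1/d}t)=\Phi(t)$, Proposition~\ref{C} then furnishes the countable, totally-ordered-by-inclusion family $\calQ$ of standard intervals with $\inf\{\diam Q:Q\in\calQ\}=0$ for which $M_{\calB(\calQ)}$ is of weak type $(1,1)$, whence $\calB(\calQ)$ differentiates $L^1(\R^2)$ (this is (i)), and for which $M_{\calB_\btheta(\calQ)}$ fails to be of weak type $(\Psi,\Psi)$ for every Orlicz function $\Psi$ with $\Psi=o(\Phi)$ at $\infty$, whence $\calB_\btheta(\calQ)$ fails to differentiate such $L^\Psi(\R^2)$ (this is (ii)). (As in the discussion above, we pass to the tail $(\theta_j)_{j\geq j_0}$; this only shrinks $\calB_\btheta(\calQ)$, which is harmless both for the negative statement (ii) and for the sub-basis statement (iii).)

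For (iii) I would apply Lemma~\ref{lem.A-Stok} with $\Phi(t)=t(1+\log_+^{1/d}t)$, which satisfies the $\Delta_2$ condition since $\log_+^{1/d}(2t)/\log_+^{1/d}(t)\to 1$, to the collection $\calR=\bigcup_k\calR_k$ with $\calR_k=\{r_\theta Q_k:\theta\in\btheta_k\}$ from Proposition~\ref{B}, taking $B_k:=\Theta_k=B(0,\ell_k)$ and $\lambda_k:=\zeta^{-t_{2k}}=\zeta^{-(2k)^d}$, which increases to $\infty$. Hypothesis~(i) of the lemma is Proposition~\ref{B}(iii); hypothesis~(iii) is Proposition~\ref{B}(ii), which reads $|R\cap\Theta_k|/|R|\geq\gamma'(C)\zeta^{t_{2k}}=\gamma'(C)/\lambda_k$; hypothesis~(ii) is precisely the estimate obtained just above, since a finite subcollection $\calS\subseteq\calR_k$ corresponds to a subset $\btheta'\subseteq\btheta_k$ and one then has $\int_{\R^2}\Psi(\sum_{R\in\calS}\chi_R)\leq K_3''|Q_k|\leq K_3''\sum_{R\in\calS}|R|$, with $\Psi$ the complementary function to $\Phi$. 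The one point requiring a brief computation is hypothesis~(iv): Proposition~\ref{B}(i) gives $|Y_k|\geq\gamma(C)\,k\,\lambda_k\,|\Theta_k|$, while $\log\lambda_k=(2k)^d\log(1/\zeta)$ yields $\log_+^{1/d}\lambda_k\leq c\,k$, so that $\Phi(\lambda_k)=\lambda_k(1+\log_+^{1/d}\lambda_k)\leq c'\,k\,\lambda_k$ for $k$ large; combining these gives $|Y_k|\geq c_3\,\Phi(\lambda_k)\,|\Theta_k|$ for a suitable $c_3>0$. Lemma~\ref{lem.A-Stok} then produces a differentiation basis $\calB\subseteq\calB(\calR)$ differentiating precisely $L^\Phi(\R^2)$, and since every member of $\calR_k$ is a rotation by an angle of $\btheta_k\subseteq\btheta$ of the standard interval $Q_k\in\calQ$, one has $\calB(\calR)\subseteq\calB_\btheta(\calQ)$, hence $\calB\subseteq\calB_\btheta(\calQ)$, which is (iii): $\calB$ differentiates exactly $L\log^{1/d}L(\R^2)$.

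For the last assertion of (iii) I would argue by localisation: for any $p>1$ one has $\Phi(t)=t(1+\log_+^{1/d}t)\leq C_p\,t^p$ for $t$ large, so every $f\in L^p(\R^2)$ lies locally in $L^\Phi(\R^2)$; writing $f=f\chi_{B_R}+f\chi_{\R^2\setminus B_R}$, the function $f\chi_{B_R}$ belongs to $L^\Phi(\R^2)$ and is therefore differentiated a.e.\ by $\calB$, and since the elements of $\calB(x)$ shrink to $x$ the averages of $|f|$ and of $|f\chi_{B_R}|$ over them eventually agree at a.e.\ $x$ in the interior of $B_R$; letting $R\to\infty$ shows that $\calB$ differentiates $L^p(\R^2)$ for every $p>1$.

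The main obstacle is essentially already behind us: all the genuine analytic work (the verification of the hypotheses of Proposition~\ref{C}, and in particular the bound $\int_{\R^2}\Psi(\sum_{\theta\in\btheta'}\chi_{r_\theta Q_k})\leq K_3''|Q_k|$ extracted from the telescoping sums in $\zeta^{-i_r^d}$ and $e^{j^d}$) has been done above. What remains is of a bookkeeping nature: matching the notation of Propositions~\ref{B} and~\ref{C} with that of Lemma~\ref{lem.A-Stok}, and in particular noticing that the choice $\lambda_k=\zeta^{-(2k)^d}$ makes $\Phi(\lambda_k)$ comparable to $k\lambda_k$, which is precisely what reconciles the growth rate in Proposition~\ref{B}(i) with requirement~(iv) of Stokolos' lemma.
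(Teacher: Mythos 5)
Your proof is correct and follows essentially the same route as the paper: verify the hypotheses of Proposition~\ref{C} with $C=2$, $\zeta=ab$, $t_k=k^d$, $\beta=1/d$ for parts (i) and (ii), then feed the family from Proposition~\ref{B} together with the estimate $\int_{\R^2}\Psi(\sum_{\theta\in\btheta'}\chi_{r_\theta Q_k})\leq K_3''|Q_k|$ into Lemma~\ref{lem.A-Stok} (taking $\lambda_k=\zeta^{-(2k)^d}$ and $B_k=\Theta_k$) to get (iii). The paper leaves the Stokolos-lemma bookkeeping implicit behind the phrase ``Combining now Proposition~\ref{C} and Lemma~\ref{lem.A-Stok}''; you have simply spelled out that matching (including the crucial observation that $\Phi(\lambda_k)\asymp k\lambda_k$, which reconciles Proposition~\ref{B}(i) with hypothesis~(iv) of the lemma), and supplied the routine localisation argument for the final $L^p$ remark.
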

\begin{Remark} Let $(a_{j})$ and $d$ be as above, that is let $\{a_{j}\}$ be a non-increasing sequence of positive real numbers with 
$0< a = \inf_{j} a_{j} \leq \sup_{j} a_{j}= b <1$ and $0< d< 1$. Then ${(a_{j})}^{j^d}$ cannot be lacunary according to the definition in 
\cite{MOONENS2016}. Assume, by contradiction that there exist $\alpha$ and $\beta$ satisfying
$$0 < \alpha \leq \frac{{a_{j+1}}^{{(j+1)}^d}}{{a_{j}}^{{j}^d} }< \beta < 1.$$
Then, for each $j \geq 2$, it is
$${\alpha}^{\frac{j-1}{j^{d}}} {a_{1}}^{\frac{1}{j^{d}}} \leq a_{j} \leq {\beta}^{\frac{j-1}{j^{d}}} {a_{1}}^{\frac{1}{j^{d}}},$$
so that it must be $\lim_{j} {a_{j}} =0$, and this contradicts the hypothesis $\inf_{j} a_{j}   = \lim_{j} a_{j} = a >0$.  \end{Remark}

\subsection*{Acknowledgements.} Both authors would like to thank their respective research institutes for the warm hospitality they could enjoy from them when preparing the present manuscript. The second author would like to acknowledge the support of the ``Laboratory Ypatia of Mathematical Sciences'', thanks to which the current collaboration has been possible.

\vspace{0.3cm}
\noindent
\small{\textsc{Emma D'Aniello},}
\small{\textsc{Dipartimento di Matematica e Fisica},}
\small{\textsc{Scuola Politecnica e delle Scien\-ze di Base},}
\small{\textsc{Universit\`a degli Studi della Campania ``Luigi Vanvitelli''},}
\small{\textsc{Viale Lincoln n. 5, 81100 Caserta,}}
\small{\textsc{Italia}}\\
\footnotesize{\texttt{emma.daniello@unicampania.it}.}

\vspace{0.3cm}
\noindent
\small{\textsc{Laurent Moonens},}
\small{\textsc{Laboratoire de Math\'ematiques d'Orsay, Universit\'e Paris-Sud, CNRS UMR8628, Universit\'e Paris-Saclay},}
\small{\textsc{B\^atiment 307},}
\small{\textsc{F-91405 Orsay Cedex},}
\small{\textsc{Fran\-ce}}\\
\footnotesize{\texttt{laurent.moonens@math.u-psud.fr}.}

\vspace{0.3cm}
\noindent
\small{\textsc{Joseph M. Rosenblatt},}
\small{\textsc{Department of Mathematical Sciences, Indiana University-Purdue University Indianapolis},}
\small{\textsc{402 North Blackford Street},}
\small{\textsc{Indianapolis, IN 46202-3216},}
\small{\textsc{U.S.A.}}\\
\footnotesize{\texttt{joserose@iupui.edu}.}


\end{document}